\theoremstyle{definition}
\newtheorem{definition}{Definition}[section]
\theoremstyle{plain}
\newtheorem{proposition}{Proposition}[section]
\newtheorem{theorem}{Theorem}[section]
\theoremstyle{remark}
\theoremstyle{definition}
\newtheorem{remark}{Remark}[section]
\newtheorem{example}{Example}[section]
\DeclareMathOperator{\del}{\partial}
\DeclareMathOperator{\D}{\operatorname{d}}
\DeclareMathOperator{\Pf}{\operatorname{Pf}}
\DeclareSymbolFont{bbold}{U}{bbold}{m}{n}
\DeclareSymbolFontAlphabet{\mathbbold}{bbold}
\let\sgn\relax\DeclareMathOperator{\sgn}{\operatorname{sgn}}
\newcommand{\iprod}{\mathbin{\lrcorner}}
\title{Generalized Geometries Constructed from Differential Forms on Cotangent Bundle}
\author{Radek Suchánek\footnote{Department of Mathematics and Statistics, Masaryk University, Brno, Czech Republic}} 
\date{\today}
\begin{document}

\maketitle





\noindent \textsc{Abstract.} We investigate the landscape of generalized geometries that can be derived from Monge-Ampère structures. Instead of following the approaches of Banos, Roubtsov, Kosmann-Schwarzbach, and others, we take a new path, inspired by the results of Hu, Moraru, and Svoboda. We construct a large family of new generalized almost geometries derived from non-degenerate 2D symplectic Monge-Ampère structures and other related geometric objects, such as complex structures. We demonstrate that, under certain assumptions, non-degenerate Monge-Ampère structures give rise to quadric surfaces of generalized almost geometries. Additionally, we discuss the link between Monge-Ampère structures and Monge-Ampère equations within this framework.

\tableofcontents

\section{Introduction}

Construction of generalized structures directly from tensor fields of the corresponding block representation was considered by many authors \cite{ BANOS2007841, Crainic2004GeneralizedCS, Gualtierri2011,hu2019commuting, Kosmann-Schwarzbach2010, Salvai2015, VAISMAN201584}. An example of this approach relevant to our case was Crainic's paper \cite{Crainic2004GeneralizedCS}, where the relations for tensor fields defining generic (integrable) generalized complex structures were described. Using the notion of a twist of a $2$-form by an endomorphism, Crainic proved the correspondence between generalized complex structures and Hitchin pairs. By Lychagin's results \cite{kushner_lychagin_rubtsov_2006, Lychagin, Lychagin-Diff_eq_on_2D_mflds}, every 2D Monge-Ampère equation can be encoded by a pair of $2$-forms, which moreover determine a specific endomorphism. Applying Monge-Ampère theory in the context of generalized complex geometry, Banos showed in \cite{BANOS2007841} that Monge-Ampère structures of divergence type give rise to generalized almost complex structures, and integrability of these structures is connected with the local equivalence problem for 2D symplectic Monge-Ampère equations with non-vanishing Pfaffian (i.e. elliptic or hyperbolic equations). This was further used by Kosmann-Schwarzbach and Rubtsov in \cite{Kosmann-Schwarzbach2010} to deform a Lie algebroid structure on $\mathbb{T} \left( T^*\mathcal{B} \right)$, which further induces a Courant algebroid structure on $\mathbb{T} \left( T^*\mathcal{B} \right)$ called by the authors \textit{Monge-Ampère Courant algebroid}. 

In this paper, we explore further the landscape of generalized geometries, which can be derived from Monge-Ampère structures. There were two apparent possibilities to do that. Either go into higher dimensions, especially dimension three, where symplectic classification is still possible or use a completely different method than Banos did. We took the latter path. Motivated by the results in \cite{hu2019commuting}, where anticommutative pairs and generalized metric compatible structures are considered, we constructed many new generalized almost geometries derived from non-degenerate 2D symplectic Monge-Ampère structures and from other geometric objects, they define (e.g. a complex structure and a pseudo-Riemannian metric). Some of these generalized almost geometries can be proven to be integrable by virtue of results in \cite{Banos2006IntegrableGA, BANOS2007841, Kosmann-Schwarzbach2010, kushner_lychagin_rubtsov_2006, Lychagin-Rubtsov-1983}. On the other hand, the standard notion of integrability via Nijenhuis tensor (or equivalently via Courant involutivity of certain almost Dirac structures) cannot be applied to many of the geometries constructed by our method. For example, for non-isotropic cases, one can consider the notion of \textit{weak integrability} instead \cite{hu2019commuting, SvobodaFreidel2019}. 
\hfill \\
\hfill \\

\noindent \textbf{Original content of the paper.} With the help of Monge-Ampère theory \cite{Lychagin, Lychagin-Diff_eq_on_2D_mflds, kushner_lychagin_rubtsov_2006}, we construct a family of generalized almost structures associated to 2D symplectic M-A structures. We show that, under certain assumptions, non-degenerate M-A structures give rise to quadric surfaces of generalized almost geometries. In this framework, we discuss the link between M-A structures and M-A equations.

\section{Monge-Ampėre structures and equations}

Let $\mathcal{M}$ be a $n$-dimensional symplectic manifold. In this case, $n$ must be an even number. We now define a specific pair of differential forms, which gives rise to other geometric structures that we will study in the next sections. 


\begin{definition}\label{def: M-A structure}
A pair $(\Omega, \alpha) \in \Omega^2 \left( \mathcal{M} \right) \times \Omega^n \left( \mathcal{M} \right)$ is called \textit{a Monge-Ampère structure over $\mathcal{M}$} (or just \textit{M-A structure}), if $\Omega$ is a symplectic form and 
    \begin{align*}
        \Omega \wedge \alpha = 0 \ . 
    \end{align*}
If $\mathcal{M} = T^* \mathcal{B}$, then $(\Omega, \alpha)$ is called \textit{a symplectic Monge-Ampère structure}. 
\end{definition}


\noindent \textit{Notation.} In our considerations, we will be dealing with symplectic M-A structures, and the symbol $\Omega$ will always denote the canonical symplectic form of the cotangent bundle.


\subsection{Monge-Ampėre theory in 2D}\label{subsection: 2D Monge-Ampėre theory}

We now consider symplectic 2D M-A structures and show how they look in canonical symplectic coordinates. We then proceed with the description of the corresponding M-A operators and equations. We will discuss some unique features of 2D M-A theory and describe various tensor fields that can be constructed from (non-degenerate) 2D M-A structures, and which are crucial for our further considerations.

\hfill 

Let $\dim \mathcal{B} = 2$, and recall that $\Omega$ denotes the canonical symplectic form on the cotangent bundle $T^* \mathcal{B}$. In the Darboux coordinates $x,y,p,q$ ($x,y$ are the base coordinates on $\mathcal{B}$), the symplectic form writes as
    \begin{align}\label{eq: symplectic form}
        \Omega = \D x \wedge \D p + \D y \wedge \D q \ .
    \end{align}
Let $\alpha$ be a $2$-form given by 
    \begin{align}\label{def: general alpha for 2D SMAE}
        \begin{split}
        \alpha & = A \D p \wedge \D y + B( \D x \wedge \D p - \D y \wedge \D q) \\ 
        & \qquad + C \D x \wedge \D q + D \D p \wedge \D q + E \D x \wedge \D y \ , 
        \end{split}
    \end{align} 
where the coefficients $A,B,C,D,E$ are smooth functions on $T^*\mathcal{B}$. Then $\alpha \wedge \Omega = 0$ holds for all possible choice of the coefficients, and $(\Omega, \alpha)$ is a M-A structure. 


Now let $f \colon \mathcal{B} \to \mathbb{R}$ be a smooth function. The differential of $f$ determines a section, $\D f \colon \mathcal{B} \to T^*\mathcal{B}$, given by $\D f (x) : = \D_x f$, and we can pullback $\alpha$ onto $\mathcal{B}$ to obtain a top form $(\D f )^* \alpha \in \Omega^2(\mathcal{B})$. Then the equation
    \begin{align}\label{def: M-A equation}
        (\D f )^* \alpha = 0
    \end{align}
defines a nonlinear second-order PDE for $f$ in two variables. If $\alpha$ is given by \eqref{def: general alpha for 2D SMAE}, then the equation \eqref{def: M-A equation} corresponds to \textit{2D symplectic Monge-Ampère equation}
    \begin{align}\label{eq: M-A equation in coordinates}
        A f_{xx} + 2B f_{xy} + C f_{yy} + D \left( f_{xx} f_{yy} - {f_{xy}}^2 \right) + E  = 0  \ ,
    \end{align}
where $f_{xy} : = \frac{\partial^2 f}{\partial x \partial y}$, and $A,B,C,D,E$ are smooth functions, which depend on $x,y, f_x, f_y$.

In this way, we can represent M-A equations via a pair of differential forms, the M-A structures. Nonetheless, this representation has a certain ambiguity, which can be partially removed with the notion of \textit{effectivity}. The condition $\Omega \wedge \alpha = O$ can be viewed as a definition of $\alpha$ being effective.

For a detailed exposition of the theory of Monge-Ampère equations and their applications, particularly in dimensions two and three, see \cite{kushner_lychagin_rubtsov_2006}. For further details about applications of Monge-Ampère theory, for example in the theory of complex differential equations, see \cite{BANOS20112187}, theoretical meteorology, and incompressible fluid theory, see \cite{Rubtsov_1997, RoubRoul2001, roulstone2009kahler, phdthesis, VolRoul2015}.


\begin{example}\label{example: differential forms for the Laplace and von Karman eqs}
Let $\alpha = - \D x \wedge \D q + \D y \wedge \D p$. Then 
    \begin{align*}
        (\D f )^* \alpha =  - \D x \wedge \D(f_y) + \D y \wedge \D(f_x) = (-f_{yy} -f_{xx} ) \D x \wedge \D y \ . 
    \end{align*}
Hence $(\D f )^* \alpha = 0$ amounts to the 2D Laplace equation $f_{yy} + f_{xx} = 0$ and $(\Omega, \alpha)$ is the corresponding M-A structure. Now let $\alpha = p \D p \wedge dy + \D x \wedge \D q$. Then $(\D f )^* \alpha = 0$ describes the von Karman equation $f_x f_{xx} - f_{yy} = 0$. 
\end{example}


\begin{definition}[\cite{kushner_lychagin_rubtsov_2006}]\label{def: Pfaffian, elliptic/hyperbolic/normalized structure}
\textit{Pfaffian} of a 2D M-A structure, $\Pf (\alpha) $, is defined by 
    \begin{align}\label{def: Pfaffian}
        \alpha \wedge \alpha = \Pf (\alpha) \Omega \wedge \Omega \ .
    \end{align}
M-A structure is called \textit{non-degenerate} if the Pfaffian is nowhere-vanishing. If $\Pf (\alpha) > 0$, then she structure is called \textit{elliptic}, and if $\Pf (\alpha) < 0$, then it is called \textit{hyperbolic}. We call a non-degenerate structure \textit{normalized}, if $|\Pf (\alpha)| = 1$. 
\end{definition} 


\begin{remark}
In the modelling of stably stratified geophysical flows, the Pfaffian is related with the Rellich's parameter \cite{viudez_dritschel_2002, dritschel_viudez_2003, Rubtsov2019}.
\end{remark}


For a general 2D symplectic M-A structure, the Pfaffian in canonical coordinates writes as
    \begin{align}\label{Pfaffian in canonical coordinates}
        \Pf (\alpha) = -B^2 + AC - DE \ .
    \end{align}
This can be directly checked using coordinate expressions \eqref{eq: symplectic form} and \eqref{def: general alpha for 2D SMAE}.


\begin{example}\label{example: Pfaffian for von Karma}
Let $(\Omega, \alpha)$ be the M-A structure described in example \ref{example: differential forms for the Laplace and von Karman eqs}, i.e. structure for 2D Laplace equation. Then $\operatorname{Pf} (\alpha) = 1$. Now consider the M-A structure for the von Karman equation $f_x f_{xx} - f_{yy} = 0$. Then $\operatorname{Pf} (\alpha) = p$. By Lychagin-Rubtsov theorem \ref{Lychagin-Rubtsov theorem} (see section \ref{subsection: endomorphisms and symmetric form}), the 2D Laplace equation gives rise to integrable complex structure, while the von Karman equation does not. 
\end{example}


\noindent \textbf{Normalization.} A non-degenerate M-A structure can be normalized $(\Omega, \alpha) \mapsto (\Omega, n(\alpha) )$ by 
    \begin{align}\label{def: normalization}
        n (\alpha) : = { |\Pf (\alpha)| }^{ -\frac{1}{2} } \alpha
    \end{align}
Indeed, the Pfaffian of $n (\alpha)$ satisfies $|\Pf \left( n(\alpha) \right)| = 1$, since
    \begin{align*}
        n (\alpha) \wedge n (\alpha) =  { |\Pf (\alpha)| }^{-1} \alpha \wedge \alpha = \frac{ \Pf (\alpha) }{ |\Pf (\alpha)| } \Omega \wedge \Omega \ ,
    \end{align*}
which implies
    \begin{align}\label{eq: Pfaffian of a normalized structure}
        \Pf \left( n(\alpha) \right) = \sgn \Pf (\alpha) \ .
    \end{align}
A non-degenerate M-A structure $(\Omega, \alpha)$ and its normalization $(\Omega, n(\alpha) )$ correspond to the same M-A equation, since 
    \begin{align*}
         (\D f)^* n(\alpha) = \left. { |\Pf (\alpha)| }^{ -\frac{1}{2} }\right\vert_{\operatorname{Im} \D f} (\D f)^* \alpha = 0 \ . 
    \end{align*}


\begin{remark}
Notice that we could have rescaled $\alpha$ with an arbitrary non-vanishing function, which would result in a new M-A structure $(\Omega, \Tilde{\alpha})$. Then, by the same argument as for the normalization, $(\Omega, \Tilde{\alpha})$ defines the same M-A equation as $(\Omega, \alpha)$. The main reason we choose to work with normalized structures is that generalized geometries that we will construct from $\alpha$ are not invariant with respect to the rescaling. Hence the normalization condition provides a consistent choice of the representative in the class $[\alpha]$, where $\tilde{\alpha} \in [\alpha]$, if and only if $\Tilde{\alpha} = e^{h}\alpha$ for some function $h$. Of course, the same argument also holds for other structures derived from $\alpha$.
\end{remark}

\subsection{Field of endomorphisms}\label{subsection: endomorphisms and symmetric form}

In the following paragraphs, we want to describe how every non-degenerate M-A structure defines a field of endomorphisms, which square to either $\operatorname{Id}_{T \left( T^*\mathcal{B} \right)}$, or $-\operatorname{Id}_{T \left( T^*\mathcal{B} \right)}$. Before we do so, we need to fix some notation.

\hfill 

\noindent \textit{Notation.} Tensor $\sigma \in \Gamma \left(T^*\mathcal{B} \otimes T^*\mathcal{B} \right)$ can be identified with a $C^\infty(\mathcal{B})$-linear map $\sigma_\# \colon \Gamma \left( T\mathcal{B} \right) \to \Gamma \left( T^*\mathcal{B} \right)$ defined by $ \sigma_\# (X) : = X \iprod \sigma$. Similarly, if $\tau \in \Gamma \left( T\mathcal{B} \otimes T\mathcal{B} \right)$, then we denote by $\tau^\#$ the linear map $\tau^\# \colon \Gamma \left(T^*\mathcal{B} \right) \to \Gamma \left( T\mathcal{B} \right)$, where $\tau^\# (\xi) : = \xi \iprod \tau$. Now consider a non-degenerate $2$-form $\alpha \in \Omega^2(T^*\mathcal{B})$, which means that $\alpha_\# \colon T\mathcal{B} \to T^*\mathcal{B}$ is an isomorphism. Using the inverse $(\alpha_\#)^{-1} \colon T^*\mathcal{B} \to T\mathcal{B}$, we define the bivector $\pi_\alpha \in \Gamma \left( \Lambda^2 T  T^*\mathcal{B} \right)$ by 
    \begin{align}\label{def: bivector corresponding to 2-form}
        (\pi_\alpha)^\# : = (\alpha_\#)^{-1} \ . 
    \end{align}
For example, if $\Omega = \D x\wedge \D p + \D y\wedge \D q$ is the canonical symplectic form, then $\pi_\Omega = \del_x \wedge \del_p + \del_y \wedge \del_q $ is the corresponding bivector, where $\partial_x : = \frac{\partial}{\partial_x}$ is the $x$-coordinate vector field (and similarly for the other fields $\partial_i$).

When working with matrices, we will use the underline notation to make the distinction between a morphism and its matrix representation. For example, if $\rho \in \operatorname{End} \left( T \left( T^*\mathcal{B} \right) \right)$, then corresponding matrix will be denoted $\underline{\rho}$ (which will always be understood with respect to the canonical coordinates of $\Omega$). Also, we will be omitting the $\#$ symbol when dealing with the morphisms derived from $2$-forms (and $2$-vectors), e.g. the matrix of $\alpha_\#$ will be denoted simmply $\underline{\alpha}$. Similarly, the matrix of $(\pi_\alpha)^\# $ will be denoted simply by $\underline{\alpha}^{-1}$ (see defining equation \eqref{def: bivector corresponding to 2-form}). When dealing with generalized structures, we write $\mathbb{J}$ for coordinate-free description, as well as for the coordinate description via matrices. Since block description of generalized structure either contains coordinate-free objects or their matrices, and the distinction between the two is clear in our notation, there is not much space for confusion.

\hfill 

\noindent \textbf{Almost complex and almost product structure.} Let $\rho \in \operatorname{End} \left( T T^*\mathcal{B} \right)$ be an endomorphism defined by
    \begin{align}\label{def: rho_alpha tensor}
        \rho : ={ | \Pf (\alpha) | }^{ - \frac{1}{2} } \pi_\Omega^\# \circ \alpha_\# \ .
    \end{align}
If $(\Omega, \alpha)$ is elliptic, then $\rho^2 = -\operatorname{Id}_{ T \left( T^*\mathcal{B} \right) }$, if $(\Omega, \alpha)$ is hyperbolic, then $\rho^2 = \operatorname{Id}_{ T \left( T^*\mathcal{B} \right) } $. The fact that $\rho$ is either an almost complex structure on $T^*\mathcal{B}$ (if $\Pf (\alpha) > 0$), or an almost product structure (if $\Pf (\alpha) < 0$), was proven in \cite{Lychagin-Rubtsov-1983}. In the canonical coordinates,
    \begin{align}\label{rho structure matrix - general case}
        \underline{\rho} = { | \Pf (\alpha) | }^{- \frac{1}{2} }
            \begin{pmatrix}
            B & -A & 0 & -D \\
            C & -B & D & 0 \\
            0 & E & B & C \\
            -E & 0 & -A & -B 
            \end{pmatrix} 
    \end{align}
and, as expected, it follows that 
    \begin{align*}
        \underline{\rho}^2 = \frac{-\Pf (\alpha)}{ | \Pf (\alpha) | } \operatorname{Id}_{ T \left( T^*\mathcal{B} \right) } = -\sgn \Pf (\alpha) \operatorname{Id}_{ T \left( T^*\mathcal{B} \right) } \ . 
    \end{align*}
Notice that $\rho$ is invariant of the normalization, since \eqref{eq: Pfaffian of a normalized structure} yields
    \begin{align*}
        \sgn \Pf \left( n(\alpha) \right) = \sgn^2 \Pf (\alpha) = \sgn \Pf (\alpha) \ . 
    \end{align*}

\hfill 

\noindent \textbf{Integrability.} V. Lychagin and V. Rubtsov showed in \cite{Lychagin-Rubtsov-1983} that there is a~direct link between the local equivalence of M-A equations and integrability of the $\rho$ structure derived from the corresponding M-A structure $(\Omega, \alpha)$. Moreover, the integrability condition can be expressed as a certain closedness condition. 

\begin{proposition}[Lychagin-Rubtsov \cite{Lychagin-Rubtsov-1983}]\label{Lychagin-Rubtsov theorem}
A 2D symplectic M-A equation $(\D f)^* \alpha = 0$ can be locally transformed via a symplectic transformation to either the Laplace equation $\Delta f = 0$, or the wave equation $\Box f = 0$, if and only if the $\rho$ structure is integrable, which is equivalent to $ \frac{ \alpha }{ \sqrt{ | \Pf (\alpha) | } }$ being closed. 
\end{proposition}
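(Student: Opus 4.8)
The plan is to prove the Lychagin–Rubtsov proposition in three stages: first reduce the local-equivalence statement to the integrability of $\rho$, then reduce integrability of $\rho$ to the closedness of the normalized form $n(\alpha) = \alpha/\sqrt{|\Pf(\alpha)|}$. I will treat the elliptic and hyperbolic cases in parallel, since $\rho$ is an almost complex structure in the first case and an almost product structure in the second, and the argument is formally the same modulo the sign $\sgn\Pf(\alpha)$.

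First I would recall that by the Newlander–Nirenberg theorem (elliptic case) and its split analogue (hyperbolic case, where integrability of an almost product structure is equivalent to each of the two eigendistributions being involutive, hence Frobenius-integrable), $\rho$ is integrable precisely when its Nijenhuis tensor $N_\rho$ vanishes. A model computation for the Laplace equation $\alpha = \D y\wedge \D p - \D x\wedge \D q$ and for the wave equation shows that the associated $\rho$ is the standard (split-)complex structure on $T^*\mathcal{B}$, which is obviously integrable; since a symplectic change of coordinates transforms $(\Omega,\alpha)$ equivariantly (it preserves $\Omega$ and hence $\pi_\Omega$, and pulls back $\alpha$), it intertwines the corresponding $\rho$ structures. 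This gives the easy direction: if the equation is symplectically equivalent to Laplace or wave, then $\rho$ is integrable. For the converse, if $\rho$ is integrable one uses the Darboux-type normal form for an integrable (almost) complex/product structure that is additionally compatible with $\Omega$ in the weak sense dictated by $\alpha\wedge\Omega=0$; one shows the pair $(\Omega,\rho)$ can be simultaneously straightened, and reading off $\alpha$ from $\rho$ via \eqref{def: rho_alpha tensor} yields exactly the Laplace or wave form depending on $\sgn\Pf(\alpha)$.

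The core of the proof is the equivalence $N_\rho = 0 \iff \D\big(\alpha/\sqrt{|\Pf(\alpha)|}\big) = 0$. Here I would work with the normalized structure, so WLOG $|\Pf(\alpha)| = 1$ and $\rho = \pi_\Omega^\#\circ\alpha_\#$; note $\alpha(X,Y) = \Omega(\rho X, Y)$, so $\alpha$ is the ``$\rho$-twist'' of $\Omega$ in the sense of Crainic. The key algebraic facts are: $\rho$ is skew-adjoint with respect to $\Omega$ (equivalently $\alpha$ is a $2$-form), and $\rho^2 = -\sgn\Pf(\alpha)\,\Id$. Using $\D\Omega = 0$ and the standard identity expressing $\D(\iota_\rho\Omega)$ — equivalently $\D\alpha$ — in terms of the Nijenhuis tensor of $\rho$ contracted against $\Omega$ (the same computation underlying Crainic's Hitchin-pair correspondence), one gets $\D\alpha = -\,\iota_{N_\rho}\Omega$ up to a universal nonzero constant, where $\iota_{N_\rho}\Omega$ denotes the $3$-form $(X,Y,Z)\mapsto \Omega(N_\rho(X,Y),Z)$ suitably antisymmetrized. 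Since $\Omega$ is nondegenerate, $\iota_{N_\rho}\Omega = 0 \iff N_\rho = 0$, giving the claimed equivalence for the normalized form; by the rescaling remark, closedness of $\alpha/\sqrt{|\Pf(\alpha)|}$ is the coordinate-free way to state this without first normalizing.

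The main obstacle I anticipate is the second stage, the precise identity $\D\alpha \propto \iota_{N_\rho}\Omega$: one must be careful that $\rho$ is not $\Omega$-orthogonal but $\Omega$-skew, and that $\rho^2 = \pm\Id$ rather than $-\Id$ in the hyperbolic case, so the textbook formula (valid for almost complex structures compatible with a symplectic form giving a Hermitian metric) does not apply verbatim; I would instead derive it directly from $\alpha(X,Y) = \Omega(\rho X,Y)$, the Leibniz rule for $\D$, and the Frölicher–Nijenhuis bracket, checking the sign bookkeeping in Darboux coordinates using \eqref{rho structure matrix - general case} as a consistency check. The first stage (equivalence with local normal forms) is essentially Newlander–Nirenberg plus a symplectic straightening and can be cited from \cite{Lychagin-Rubtsov-1983, kushner_lychagin_rubtsov_2006}; the genuinely computational content is the Nijenhuis-versus-$\D\alpha$ identity.
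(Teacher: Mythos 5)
First, a point of reference: the paper does not prove this proposition at all --- it is quoted from Lychagin--Rubtsov \cite{Lychagin-Rubtsov-1983} (see also \cite{kushner_lychagin_rubtsov_2006}) and used as a black box, so there is no in-paper argument to compare yours against. Measured against the classical proof, your three-stage architecture (model computation plus symplectic equivariance for the easy direction; integrability theorems plus a simultaneous normal form for the converse; a tensor identity tying $\D\alpha$ to $N_\rho$ for the second equivalence) is the right shape, and the easy direction as you describe it is sound: a symplectomorphism preserves $\Omega$ and the Pfaffian, hence intertwines the $\rho$'s, and the model $\rho$'s for the Laplace and wave equations are the standard integrable structures.

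There are, however, two concrete problems. First, $\rho$ is \emph{self-adjoint}, not skew-adjoint, with respect to $\Omega$: antisymmetry of $\alpha(X,Y)=\sqrt{|\Pf(\alpha)|}\,\Omega(\rho X,Y)$ forces $\Omega(\rho X,Y)=\Omega(X,\rho Y)$. Consequently, in the hyperbolic case the $\pm1$-eigendistributions of $\rho$ are mutually $\Omega$-orthogonal and $\Omega$-\emph{symplectic}, not Lagrangian --- e.g.\ for the wave form $\alpha=\D p\wedge\D y-\D x\wedge\D q$ one finds $C_+=\operatorname{span}\{\partial_x-\partial_y,\ \partial_p-\partial_q\}$ with $\Omega(\partial_x-\partial_y,\partial_p-\partial_q)=2$. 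This both upsets the sign bookkeeping you plan to rely on and changes the nature of the ``simultaneous straightening'' in the converse of stage one: what is needed is a Darboux chart adapted to a splitting of $\Omega$ into two symplectic pieces along the two characteristic foliations $\ker\bigl(n(\alpha)\mp\Omega\bigr)$, which is where effectivity $\alpha\wedge\Omega=0$ actually does its work. Second, and more seriously, the pivotal identity $\D\alpha\propto\iota_{N_\rho}\Omega$ is asserted rather than derived, and no identity of that shape holds for a general twist of $\Omega$ by an $\Omega$-self-adjoint endomorphism: for $A=f\operatorname{Id}$ one has $N_A=0$ while $\D(f\Omega)=\D f\wedge\Omega\neq0$. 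The identity can only be true because $\rho^2=\pm\operatorname{Id}$, i.e.\ because of the normalization $|\Pf|=1$; so the normalization is not a cosmetic reduction but precisely what kills the $\D f\wedge\Omega$-type terms, and it explains why the statement involves $\alpha/\sqrt{|\Pf(\alpha)|}$ rather than $\alpha$ (the two closedness conditions are inequivalent). That derivation is the entire computational content of the equivalence $N_\rho=0\iff\D\bigl(\alpha/\sqrt{|\Pf(\alpha)|}\bigr)=0$, and your sketch leaves it undone.
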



\section{Generalized geometry of M-A structures}


We recall some key definitions and objects from generalized geometry. To avoid any confusion, we formulate these notions for a general smooth manifold $\mathcal{M}$ and then choose  $\mathcal{M} = T^*\mathcal{B}$ to investigate the generalized geometry of symplectic 2D M-A structures and 2D incompressible fluid flows.

\subsection{Generalized (almost) structures} 

A \textit{generalized tangent bundle} over $\mathcal{M}$ is the vector bundle 
    \begin{equation}\label{generalized tangent bundle}
        \begin{tikzcd}
             \mathbb{T}\mathcal{M} : = T\mathcal{M} \oplus T^*\mathcal{M} \arrow{r}{\pi} &  \mathcal{M} \ ,  
        \end{tikzcd}
    \end{equation} 
with the bundle projection $\pi$ defined by the the composition 
    \begin{equation*}
        \begin{tikzcd}
             \mathbb{T}\mathcal{M} \arrow{r}{} &  T\mathcal{M} \arrow{r}{} & \mathcal{M} \ ,  
        \end{tikzcd}
    \end{equation*} 
where the left map is the projection on the first factor of the Whitney sum, and the second map is the tangent bundle projection on $\mathcal{M}$. The pairing between vector fields and $1$-forms endows $\mathbb{T}\mathcal{M}$ with a non-degenerate, symmetric, bilinear form $\eta$
    \begin{align}\label{def: inner product on the generalized tangent bundle}
        \eta \left( \left( X, \xi \right),  \left( Y, \zeta \right) \right) : = \frac{1}{2} \left( \xi (Y) + \zeta (X) \right) \ ,
    \end{align}
which defines on $\mathbb{T}\mathcal{M}$ a pseudo-Riemannian metric of signature $(n,n)$. If we choose a coordinate system $(q^\mu)$ on $\mathcal{M}$, then the corresponding coordinate vector fields and $1$-forms define a local basis $\left( (\partial_{q^\mu}, 0 ) , ( 0, dq^\mu ) \right) $ of $\mathbb{T} \mathcal{M}$. The matrix representation of $\eta$ in this basis is 
    \begin{align*}
        \eta = 
            \begin{pmatrix}
            0 & \mathbb{1} \\
            \mathbb{1} & 0
            \end{pmatrix} \ .
    \end{align*}
    

\textit{A generalized almost complex structure on $\mathcal{M}$} is a bundle map $\mathbb{J} \colon \mathbb{T}\mathcal{M} \to \mathbb{T}\mathcal{M}$ such that $\mathbb{J}^2 = - \operatorname{Id}_{ \mathbb{T}\mathcal{M}}$ and for all $ \left( X, \xi \right),  \left( Y, \zeta \right) \in \mathbb{T}\mathcal{M}$
    \begin{align}\label{eq: J acting on eta}
         \eta \left( \mathbb{J} \left( X, \xi \right),  \mathbb{J} \left( Y, \zeta \right) \right) = \eta \left( \left( X, \xi \right), \left( Y, \zeta \right) \right) \ , 
    \end{align}
where $\eta$ is the natural inner product \eqref{def: inner product on the generalized tangent bundle} \cite{Gualtierri2011, Hitchin-gen-calabi-yau}.


\begin{example}\label{example: Hitchin pairs and generalized complex structure}
Let $(\Omega, \alpha)$ be a non-degenerate M-A structure with $\alpha$ closed (i.e. a pair of symplectic structures). Consider a $(1,1)$-tensor $A_\alpha : =  \pi_\Omega^\# \circ \alpha_\#$ (c.f. definition \eqref{def: rho_alpha tensor}). Then $(\Omega, A_\alpha)$ defines \textit{a Hitchin pair of $2$-forms} in the sense of Crainic \cite{Crainic2004GeneralizedCS, Kosmann-Schwarzbach2010} since 
    \begin{align*}
        \Omega_\# A_\alpha = A_\alpha^* \Omega_\# \ .  
    \end{align*}
Using the notion of Hitchin pairs, B. Banos showed in \cite{BANOS2007841} that every 2D non-degenerate M-A structure satisfying for appropriate $\phi \in C^\infty \left( T^* \mathcal{B} \right) $ the \textit{divergence condition}
    \begin{align*}
        \D (\alpha + \phi \Omega ) = 0 \ , 
    \end{align*}
yields an integrable generalized almost structure $\mathbb{J}_\alpha$ given as follows
    \begin{align*}
        \mathbb{J}_\alpha = 
            \begin{pmatrix}
            A_\alpha & \pi_\Omega^\# \\
            - \left( \Omega_\# + \Omega_\# A_\alpha^2 \right) & -A_\alpha^*
            \end{pmatrix}
    \end{align*}
\end{example}


The result described in example \ref{example: Hitchin pairs and generalized complex structure} was a key motivation for our further investigations of other possibilities of constructing generalized (almost) geometries from Monge-Ampère equations and the corresponding Monge-Ampère structures. In order to state our result, we need to extend the notion of a~generalized (almost) complex structure by the following definition. 


\begin{definition}[\cite{hu2019commuting}] \label{def: generalized almost structure}
\textit{A generalized almost structure on $\mathcal{M}$} (or \textit{a generalized almost geometry on $\mathcal{M}$}) is a bundle map $\mathbb{J} \colon \mathbb{T}\mathcal{M} \to \mathbb{T}\mathcal{M}$ such that
    \begin{align*}
        \mathbb{J}^2 =  \gamma_1 \operatorname{id}_{\mathbb{T}M} \ , && \mathbb{J}^\bullet \eta = \gamma_2 \eta \ ,
    \end{align*}
where $\gamma_1, \gamma_2 \in \{ -1, 1\}$ and $\mathbb{J}^\bullet\eta \left( \left( X, \xi \right),  \left( Y, \zeta \right) \right) : = \eta \bigl( \mathbb{J} \left( X, \xi \right),  \mathbb{J} \left( Y, \zeta \right) \bigr)$. Table \ref{tab: type of generalized structure - general case} describes the four possible choices of constants $\gamma_i$.
    \begin{table}[h!]
        \begin{center}
        \begin{tabular}{ | >{\centering\arraybackslash}X p{2cm} || >{\centering\arraybackslash}X p{2cm}| >{\centering\arraybackslash}X p{2cm} |  >{\centering\arraybackslash}X p{2cm} |  >{\centering\arraybackslash}X p{2cm} |} 
            \hline 
            $(\gamma_1, \gamma_2)$ & $(1, 1)$ & $(1, -1)$ & $(-1, 1)$ & $(-1, -1)$ \\ 
            \hline
            type of $\mathbb{J}$  &  GaP & GaPC & GaC & GaAC  \\
            \hline
        \end{tabular}
        \caption{\label{tab: type of generalized structure - general case} Type of a generalized almost structure depending on $(\gamma_1, \gamma_2)$.}
        \end{center}
    \end{table}
The abbreviations stand for \textit{generalized almost product} (GaP), \textit{generalized almost complex} (GaC), \textit{generalized almost para-complex} (GaPC), and \textit{generalized almost anti-complex} (GaAC) structure. A generalized structure is called \textit{non-degenerate}, if its eigenbundles are isomorphic to $T \mathcal{M}$ (if $\mathbb{J}^2 = \operatorname{Id}_{ \mathbb{T} \mathcal{M} }$), or to $T \mathcal{M} \otimes \mathbb{C}$ (if $\mathbb{J}^2 = - \operatorname{Id}_{ \mathbb{T} \mathcal{M} }$)
\end{definition}


\begin{proposition}\label{lemma: diagonal antidiagonal structures}
Let $J, P \in \operatorname{End}(T\mathcal{M})$ be an almost complex and almost product structures, respectively. Let $\alpha \in \Omega^2 (\mathcal{M})$ be a non-degenerate $2$-form, and $g \in S^2(\mathcal{M})$ a non-degenerate symmetric bilinear form on $\mathcal{M}$. Suppose that $\mathbb{J}_J, \mathbb{J}_P, \mathbb{J}_\alpha, \mathbb{J}_g \in \operatorname{End}(\mathbbold{T}\mathcal{M})$ are given as follows 
    \begin{align}\label{diagonal and antidiagonal structures}
        \begin{split}
        \mathbb{J}_J  = 
            \begin{pmatrix}
            J & 0 \\
            0 & \epsilon J^*
            \end{pmatrix} \ , \qquad \qquad
        \mathbb{J}_\alpha = 
            \begin{pmatrix}
            0 & \pi_\alpha^\# \\
            \epsilon \alpha_\# & 0
            \end{pmatrix} \ , \\ 
        \mathbb{J}_P =      
            \begin{pmatrix}
            P & 0 \\
            0 & \epsilon P^*
            \end{pmatrix} \ ,  \qquad   \qquad
            \mathbb{J}_g =
            \begin{pmatrix}
            0 & \pi_g^\# \\
            \epsilon g_\# & 0
            \end{pmatrix}  \ ,
            \end{split}
    \end{align}
where $\epsilon \in \{-1, 1\}$. Then $\mathbb{J}_J, \mathbb{J}_P, \mathbb{J}_\alpha, \mathbb{J}_g$ are generalized almost structures and they satisfy
    \begin{align}\label{relations for diag. and antidiag structures}
        \begin{split}
         - {\mathbb{J}_J}^2 & = {\mathbb{J}_P}^2 = \operatorname{Id}_{\mathbb{T}M} \ , \ \qquad \qquad    - \mathbb{J}_J^\bullet \eta = \mathbb{J}_P^\bullet \eta = \epsilon \eta \ , \\ 
         {\mathbb{J}_\alpha}^2 & = {\mathbb{J}_g}^2 = \epsilon \operatorname{Id}_{\mathbb{T}M}  \ , \qquad \qquad - \mathbb{J}_\alpha^\bullet \eta =  \mathbb{J}_g^\bullet \eta = \epsilon \eta \ .
         \end{split}
    \end{align}
Types of these structures are given in the following table. 
    \begin{table}[h!]
        \begin{center}
        \begin{tabular}{ | >{\centering\arraybackslash}X p{2cm} | >{\centering\arraybackslash}X p{2cm}| >{\centering\arraybackslash}X p{2cm} |  >{\centering\arraybackslash}X p{2cm} |  >{\centering\arraybackslash}X p{2cm} |} 
            \hline 
            $\sgn{\epsilon}$    & $\mathbb{J}_J$ & $\mathbb{J}_P$ & $\mathbb{J}_\alpha $ & $\mathbb{J}_g$ \\ 
            \hline
            $+$  &  GaAC & GaP & GaPC & GaP  \\
            \hline
            $-$  & GaC & GaPC & GaC & GaAC \\
            \hline
        \end{tabular}
        \caption{\label{tab: type of generalized structure} Type of a generalized almost structure depending on $\sgn \epsilon$.}
        \end{center}
    \end{table}
\end{proposition}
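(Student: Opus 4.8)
The plan is to verify each of the four claimed identities in \eqref{relations for diag. and antidiag structures} by direct block-matrix computation, and then read off the type from Table~\ref{tab: type of generalized structure - general case}. First I would write $\eta$ in the block form $\left(\begin{smallmatrix} 0 & \mathbb{1}\\ \mathbb{1} & 0\end{smallmatrix}\right)$ and recall the basic dualization facts: for a $(1,1)$-tensor $T$ acting on $T\mathcal M$, the block map $\left(\begin{smallmatrix} T & 0\\ 0 & \epsilon T^*\end{smallmatrix}\right)$ squares to $\left(\begin{smallmatrix} T^2 & 0\\ 0 & \epsilon^2 (T^*)^2\end{smallmatrix}\right) = \left(\begin{smallmatrix} T^2 & 0\\ 0 & (T^2)^*\end{smallmatrix}\right)$, since $\epsilon^2 = 1$ and $(T^*)^2 = (T^2)^*$; applying this with $T = J$ ($J^2 = -\operatorname{Id}$) gives $\mathbb J_J^2 = -\operatorname{Id}_{\mathbb T\mathcal M}$, and with $T = P$ ($P^2 = \operatorname{Id}$) gives $\mathbb J_P^2 = \operatorname{Id}_{\mathbb T\mathcal M}$. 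For the antidiagonal ones, $\left(\begin{smallmatrix} 0 & \pi^\#\\ \epsilon \beta_\# & 0\end{smallmatrix}\right)^2 = \left(\begin{smallmatrix} \epsilon\,\pi^\#\beta_\# & 0\\ 0 & \epsilon\,\beta_\#\pi^\#\end{smallmatrix}\right)$; here the key point is that when $\pi = \pi_\beta$ is defined by $\pi_\beta^\# = (\beta_\#)^{-1}$ as in \eqref{def: bivector corresponding to 2-form}, both diagonal blocks are the identity, so $\mathbb J_\alpha^2 = \epsilon\operatorname{Id}_{\mathbb T\mathcal M}$ and likewise $\mathbb J_g^2 = \epsilon\operatorname{Id}_{\mathbb T\mathcal M}$. (One should note that $g_\#$ is invertible because $g$ is assumed non-degenerate, and that $\pi_g^\#$ is the inverse of $g_\#$, so the same computation applies verbatim.)

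Next I would compute the action on $\eta$. For a diagonal structure, $\mathbb J_T^\bullet\eta$ has matrix $\mathbb J_T^\top \eta\, \mathbb J_T$; writing this out in blocks with $\mathbb J_T = \left(\begin{smallmatrix} T & 0\\ 0 & \epsilon T^*\end{smallmatrix}\right)$ and using that the transpose of the block map swaps $T$ with $T^*$ appropriately, one gets the off-diagonal blocks $\epsilon\, T^\top (T^*)^\top$ and $\epsilon\,(T^*)^\top T^\top$, which under the canonical pairing are both $\epsilon$ times the pairing map; hence $\mathbb J_T^\bullet\eta = \epsilon\,\eta$ for $T \in \{J, P\}$. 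This already gives $-\mathbb J_J^\bullet\eta = \mathbb J_P^\bullet\eta = \epsilon\eta$ once one is careful about the sign bookkeeping — actually the cleanest route is the intrinsic one: $\eta(\mathbb J_T(X,\xi), \mathbb J_T(Y,\zeta)) = \tfrac12\big((\epsilon T^*\xi)(TY) + (\epsilon T^*\zeta)(TX)\big) = \tfrac{\epsilon}{2}\big(\xi(T^2 Y) + \zeta(T^2 X)\big)$, and then substitute $T^2 = -\operatorname{Id}$ for $J$ and $T^2 = \operatorname{Id}$ for $P$. For the antidiagonal structures, the same intrinsic computation gives $\eta(\mathbb J_\alpha(X,\xi),\mathbb J_\alpha(Y,\zeta)) = \tfrac12\big(\epsilon\alpha_\#(X)(\pi_\alpha^\#\zeta) + \epsilon\alpha_\#(Y)(\pi_\alpha^\#\xi)\big)$; using $\alpha_\#(X)(W) = \alpha(X,W)$ together with the skew-symmetry of $\alpha$ and the fact that $\pi_\alpha^\# = (\alpha_\#)^{-1}$, this collapses to $-\tfrac{\epsilon}{2}\big(\xi(Y) + \zeta(X)\big) = -\epsilon\,\eta((X,\xi),(Y,\zeta))$, whence $\mathbb J_\alpha^\bullet\eta = -\epsilon\eta$. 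For $\mathbb J_g$ the identical manipulation but with $g$ \emph{symmetric} instead of skew produces the opposite sign, $\mathbb J_g^\bullet\eta = +\epsilon\eta$; this symmetry-vs-skewness dichotomy is exactly what separates the $\alpha$ and $g$ cases and is the one spot where the argument genuinely uses that $\alpha \in \Omega^2$ and $g \in S^2$.

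Having established all of \eqref{relations for diag. and antidiag structures}, the type assignments in Table~\ref{tab: type of generalized structure} follow mechanically: for each structure read off the pair $(\gamma_1,\gamma_2)$ as a function of $\sgn\epsilon$ and look it up in Table~\ref{tab: type of generalized structure - general case}. For instance $\mathbb J_J$ always has $\gamma_1 = -1$, and $\gamma_2 = -\epsilon$, so $\epsilon = +1$ gives $(-1,-1) = $ GaAC and $\epsilon = -1$ gives $(-1,+1) = $ GaC; $\mathbb J_\alpha$ has $\gamma_1 = \epsilon$ and $\gamma_2 = -\epsilon$, giving $(+1,-1) = $ GaPC for $\epsilon = +1$ and $(-1,+1) = $ GaC for $\epsilon = -1$; and similarly for $\mathbb J_P$ (with $\gamma_1 = +1$, $\gamma_2 = \epsilon$) and $\mathbb J_g$ (with $\gamma_1 = \epsilon$, $\gamma_2 = \epsilon$), matching every entry of the table. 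Finally I would remark that all four maps are vector-bundle endomorphisms over the identity of $\mathcal M$ since they are built from tensor fields, and the non-degeneracy hypotheses on $\alpha$ and $g$ (and the automatic invertibility of $J$, $P$) guarantee the blocks in \eqref{diagonal and antidiagonal structures} are well defined; thus they are genuine generalized almost structures in the sense of Definition~\ref{def: generalized almost structure}.

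I do not expect a serious obstacle here — the whole proof is a bookkeeping exercise in block matrices and the pairing $\eta$. The only place where care is required is keeping track of the $\epsilon$'s and the transpose/sign conventions in $\mathbb J^\bullet\eta$, and making sure the skew- versus symmetric distinction between $\alpha$ and $g$ is invoked at precisely the right moment; everything else is forced.
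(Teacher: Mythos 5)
Your proposal is correct and follows essentially the same route as the paper's proof: block-squaring the diagonal and antidiagonal maps using $(T^*)^2=(T^2)^*$ and $\pi_\beta^\#=(\beta_\#)^{-1}$, then the intrinsic computation of $\mathbb{J}^\bullet\eta$ in which the skew-symmetry of $\alpha$ versus the symmetry of $g$ produces the opposite signs, and finally reading the types off the $(\gamma_1,\gamma_2)$ table. No substantive difference from the paper's argument.
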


\begin{proof}
Since $- J^2 =  P^2 = \operatorname{Id}_{T\mathcal{M}}$ and ${J^*}^2 = {J^2}^*$, it is easy to see that $ - {\mathbb{J}_J}^2 = {\mathbb{J}_P}^2 = \operatorname{Id}_{\mathbb{T}\mathcal{M}}$. Now we recall \eqref{def: bivector corresponding to 2-form}, which means 
    \begin{align}\label{relations 1}
        g_\#  \pi_g^\#  & = \operatorname{Id}_{T^*\mathcal{M}} = \alpha_\#  \pi_\alpha^\# \ , \\
        \pi_g^\#  g_\# & = \operatorname{Id}_{T\mathcal{M}} = \pi_\alpha^\#   \alpha_\# \ .
    \end{align}
This implies ${\mathbb{J}_\alpha}^2 = {\mathbb{J}_g}^2 = \epsilon \operatorname{Id}_{\mathbb{T}M}$. We proceed with the compatibility of $\mathbb{J}_J$ with $\eta$. From \eqref{def: inner product on the generalized tangent bundle} we have
    \begin{align*}
        \mathbb{J}_J^\bullet \eta \left( (X, \xi) , (Y, \zeta) \right) = \epsilon \left( J^* \xi (JY) + J^* \zeta (JX) \right) \ . 
    \end{align*}
Using the definition of the dual map, we arrive at
    \begin{align*}
        \mathbb{J}_J^\bullet \eta \left( (X, \xi) , (Y, \zeta) \right) = \epsilon \left( \xi (A^2 Y) + \zeta (A^2 X) \right) = -\epsilon \eta \left( (X, \xi) , (Y, \zeta) \right) \ .
    \end{align*}
The computation for $\mathbb{J}_P$ is completely analogous with the only difference coming from $P^2 = - J^2$, resulting in $\mathbb{J}_P^\bullet \eta = \epsilon \eta$. The computation for $\mathbb{J}_\alpha$ (and $\mathbb{J}_g$) is slightly different. We have
    \begin{align*}
        (\mathbb{J}_\alpha)^\bullet \eta \left( (X, \xi) , (Y, \zeta) \right) = \frac{\epsilon}{2} \left( \alpha_\# X (\pi_\alpha^\# \zeta) + \alpha_\# Y (\pi_\alpha^\# \xi) \right) \ . 
    \end{align*}
Since $\alpha_\# X = \alpha (X,-)$, and due to antisymmetry of $\alpha$,
    \begin{align*}      
            \frac{\epsilon}{2} \left( \alpha_\# X (\pi_\alpha^\# \zeta) + \alpha_\# Y (\pi_\alpha^\# \xi) \right) &  = \frac{\epsilon}{2} \left( \alpha(X, \pi_\alpha^\# \zeta) + \alpha( Y, \pi_\alpha^\# \xi) \right) \\ 
            & = \frac{-\epsilon}{2} \left( ( \alpha_\#  \pi^\# \zeta) X + (\alpha_\#  \pi_\alpha^\# \xi) Y \right) \ . 
    \end{align*}
Using the relations \eqref{relations 1} once again, we obtain
    \begin{align*}
         (\mathbb{J}_\alpha)^\bullet \eta \left( (X, \xi) , (Y, \zeta) \right) = -\epsilon \eta \left( (X, \xi) , (Y, \zeta) \right) \ .
    \end{align*}
The computation for $\mathbb{J}_g$ differs from the case of $\mathbb{J}_\alpha$ only in the symmetry of $g$. Thus $- \mathbb{J}_J^\bullet \eta = \mathbb{J}_P^\bullet \eta = \epsilon \eta $, and $- \mathbb{J}_\alpha^\bullet \eta =  \mathbb{J}_g^\bullet \eta = \epsilon \eta$. The table \ref{tab: type of generalized structure} summarizes the resulting type of structures according to definition \ref{def: generalized almost structure}.
\end{proof}


\begin{example}\label{example: non-isotropic structures are not totally isotropic}
Let $g$ be a non-degenerate symmetric bilinear form. Consider
    $ \mathbb{A} = 
    \begin{pmatrix}
    0 & \pi_g^\# \\
    g_\# & 0 \\
    \end{pmatrix} .
    $
w $\mathbb{A}$ satisfies $\mathbb{A} = - \operatorname{Id}_{\mathbb{T}\mathcal{M}}$ and $\mathbb{A}^\bullet \eta = \eta$, so $\mathbb{A}$ is a generalized almost product structure. The $\pm1$-eigenbundles of $\mathbb{A}$ are
    \begin{align*}
        E_\pm = \{ (X, \pm g_\# X ) | X \in \Gamma(T\mathcal{M}) \} \ .
    \end{align*}
Indeed, for $x_+ = (X, g_\# X ) \in E_+$, and $y_- = (Y, -g_\# Y ) \in E_-$, it holds $\mathbb{A} x_+ = (X, g_\# X ) = x_+$, and $\mathbb{A} y = (-Y, g_\# Y ) = -y_-$. Now suppose $x_+,y_+ \in E_+$. Then 
    \begin{align*}
        \eta(x_+,y_+) = \frac{1}{2}\left( (g_\# X) Y + (g_\# Y) X \right) = g(X,Y) \ .
    \end{align*}
Obviously there are $X,Y \in \Gamma(T\mathcal{M})$ such that $g(X,Y) \neq 0$, thus $E_+$ is not totally isotropic, and hence $\mathbb{A}$ is a non-isotropic structure. 
\end{example}


\hfill

\noindent \textbf{Coordinate description.} Since $\left( \Omega_\# (\partial_i) \right) (\partial_j) = \Omega (\partial_i, \partial_j)$, we have
    \begin{align*}
        \underline{\Omega} = \begin{pmatrix} 0 & \mathbb{1} \\ - \mathbb{1} & 0 \end{pmatrix} \ .
    \end{align*}
Similarly for the matrix of $\pi_\Omega^\#$ we have
    \begin{align*}
        \underline{\pi_\Omega}^\# = \underline{\Omega}^{-1} \ .
    \end{align*}
The same holds true for arbitrary non-degerate $2$-form $\alpha$. Hecne we will denote the matrices of $\alpha_\#$ and $\pi_\alpha^\#$ simply by $\underline{\alpha}$ and $\underline{\alpha}^{-1}$, respectively. The situation does not change when dealing with a symmetric non-degenerate $2$-tensor $g$. Finally, we recall that for an endomorphism $A$, the matrix of $A$ and the matrix of $A^*$ are related by the transpose $\underline{A}^* = \underline{A}^T$. Thus the generalized almost structures described in lemma \ref{lemma: diagonal antidiagonal structures}, which are the building blocks for our subsequent constructions, are written in the canonical coordinates as 
    \begin{align*}
        \mathbb{J}_J  = 
            \begin{pmatrix}
            J & 0 \\
            0 & \epsilon J^T
            \end{pmatrix} \ , \qquad \qquad
        \mathbb{J}_\alpha = 
            \begin{pmatrix}
            0 & \underline{\alpha}^{-1} \\
            \epsilon \underline{\alpha} & 0
            \end{pmatrix} \ , \\ 
        \mathbb{J}_P =      
            \begin{pmatrix}
            P & 0 \\
            0 & \epsilon P^T
            \end{pmatrix} \ ,  \qquad   \qquad
            \mathbb{J}_g =
            \begin{pmatrix}
            0 & \underline{g}^{-1}\\
            \epsilon \underline{g} & 0
            \end{pmatrix}  \ ,
    \end{align*}

\hfill

\noindent \textbf{The Courant bracket.} The space of sections $\Gamma \left(\mathbb{T}\mathcal{M} \right) = \Gamma \left( T\mathcal{M} \right) \oplus \Gamma \left( T^*\mathcal{M} \right)$ is equipped with the antisymmetric \textit{Courant bracket} $[ -,- ]_C$ \cite{Courant-DiracBracket}
    \begin{align}\label{def: Courant bracket}
       [ \left( X, \xi \right),  \left( Y, \zeta \right) ]_C : = \left( [X,Y] , \mathcal{L}_X \zeta -  \mathcal{L}_Y \xi - \frac{1}{2} \D ( X \iprod \zeta -  Y \iprod \xi ) \right) \ ,
    \end{align}
where $[X,Y]$ is the Lie bracket and $\mathcal{L}$ is the Lie derivative. Note that the Courant bracket, which is an extension of the Lie bracket for vector fields, does not satisfy the Jacobi identity. More importantly for our considerations, the Courant bracket can be used to define integrability of \textit{isotropic} structures on $\mathbb{T}\mathcal{M}$.

\subsubsection{Isotropy, Dirac structures, and integrability}

Every generalized almost structure comes together with the corresponding subbundles $E_{+} , E_{-}$, which are $\pm 1$-eigenbundles if $\mathbb{J}^2 = \operatorname{id}_{\mathbb{T}\mathcal{M} }$, and $\pm i$-eigenbundles if $\mathbb{J}^2 = -\operatorname{Id}_{\mathbb{T}\mathcal{M}}$. 

\begin{definition}
A subbundle $E \subset \mathbb{T}\mathcal{M}$ is called \textit{totally isotropic} (w.r.t. the inner product $\eta$), if for all $x,y \in E: \eta(x,y) = 0$. Totally isotropic $E$ is called an \textit{almost Dirac structure on $\mathcal{M}$} if $\operatorname{rank}E = \operatorname{rank}T\mathcal{M}$. A \textit{Dirac structure on $\mathcal{M}$} is an almost Dirac structure $E$ such that $[E, E]_C \subset E$.
\end{definition}

\begin{remark}
A Dirac structure on $\mathcal{M}$ can be equivalently defined as a totally isotropic subbundle $E \subset \mathbb{T}\mathcal{M}$ of maximal rank, which is involutive with respect to the Courant bracket.      
\end{remark}

The four possible generalized almost structures determined by definition \ref{def: generalized almost structure} can be divided into two subsets depending on whether the eigenbundles $E_\pm$ are almost Dirac structures or not. 

\begin{definition}
Let $\mathbb{J}$ be a generalized almost structure. If the eigenbundles $E_\pm$ are almost Dirac structures, then $\mathbb{J}$ is called \textit{isotropic} (with respect to $\eta$). Otherwise $\mathbb{J}$ is called \textit{non-isotropic}.
\end{definition}

The involutivity condition of almost Dirac structures $E_\pm$ can serve as a~definition of integrability only for isotropic $\mathbb{J}$ (see remark \ref{non-isotropy and integrability} below).

\begin{definition}\label{def: integrability of a generalized structure}
An isotropic generalized almost structure $\mathbb{J} \in \operatorname{End} (\mathbb{T} \mathcal{M}) $ is called \textit{integrable} if the corresponding eigenbundles $E_+, E_-$ are Dirac structures. An integrable generalized almost structure is called a \textit{generalized structure}. 
\end{definition}

Let $\mathbb{J} \in \operatorname{End} ( \mathbb{T}\mathcal{M}) $ be an isotropic generalized almost structure. Then the torsion of $\mathbb{J}$ is defined for all $x,y \in \Gamma ( \mathbb{T}\mathcal{M} )$ by 
    \begin{align}\label{def: torsion of A}
        N_{\mathbb{J}} (x,y) : = [\mathbb{J}x, \mathbb{J}y]_C + \mathbb{J}^2 [x,y]_C - \mathbb{J} \left( [ \mathbb{J}x, y ]_C + [x, \mathbb{J}y]_C \right) \ .
    \end{align}
This $(1,2)$-tensor $N_{\mathbb{J}} \colon \Gamma ( \mathbb{T}\mathcal{M} ) \otimes \Gamma ( \mathbb{T}\mathcal{M} ) \to \Gamma ( \mathbb{T}\mathcal{M} ) $ is called (generalized) \textit{Nijenhuis tensor}. An isotropic $\mathbb{A}$ is integrable if and only if the coresponding Nijenhuis tensor vanishes: $N_{\mathbb{A}} (x,y) = 0$ for all $x,y \in \Gamma (\mathbb{T}\mathcal{M})$ \cite{Crainic2004GeneralizedCS}. 

\begin{remark}\label{non-isotropy and integrability}
If a generalized almost structure $\mathbb{J}$ is non-isotropic, then the eigenbundles $E_\pm$ are not totally isotropic (see example \ref{example: non-isotropic structures are not totally isotropic}) and the Courant bracket is not well-defined on them. Thus the torsion \eqref{def: torsion of A} for non-isotropic structures is not well-defined as a tensor. As a consequence, the notion of integrability, which is usually given either by the condition of vanishing Nijenhuis tensor or by definition \ref{def: integrability of a generalized structure}, cannot be applied to non-isotropic structures. 
\end{remark}

\begin{remark}
Authors of \cite{hu2019commuting} considered the notion of \textit{weak integrablity}, which they defined for a commuting pair consisting of an indefinite generalized metric $\mathbb{G}$ and an arbitrary generalized structure $\mathbb{J}$. This notion of weak integrability can be applied to non-isotropic structures as well but requires the existence of a \textit{generalized Bismut connection}\footnote{A generalized Bismut connection associated to a generalized metric is a Courant algebroid connection that parallelizes the generalized metric \cite{Gualtieri2007BranesOP}. The extension for indefinite generalized metrics was given in \cite{hu2019commuting}.} $\mathfrak{D}$ associated to a generalized metric $\mathbb{G}$ \cite{Gualtieri2007BranesOP, hu2019commuting}. By definition, the weak integrability condition is $\mathfrak{D} \mathbb{J} = 0$ \cite{hu2019commuting}. In the case of generalized Kähler and generalized para-Kähler structures, the integrability condition introduced in \cite{Gualtieri2007BranesOP} for the case of generalized Kähler structures (and extended to the case of generalized para-Kähler structures in \cite{hu2019commuting}) implies weak integrability \cite{hu2019commuting}. 
\end{remark}


\subsection{Generalized geometry and M-A theory}


Recall that $\mathcal{B}$ is $2$-dimensional and $\Omega$ denotes the canonical symplectic form on $T^* \mathcal{B}$.

\begin{proposition}\label{cor: generalized geometry from generic M-A structures}
Let $(\Omega, \alpha) \in \Omega^2 \left( T^*\mathcal{B} \right) \times \Omega^2 \left( T^*\mathcal{B} \right)$ be a non-degenerate Monge-Ampère structure, $\rho \in \operatorname{End} \left( T T^*\mathcal{B} \right)$ the corresponding endomorphism defined by \eqref{def: rho_alpha tensor}. Then 
    \begin{align}\label{diag and antidiag structures for generic 2D M-A equation}
        \mathbb{J}_\rho = 
            \begin{pmatrix}
            \rho & 0 \\
            0 & \epsilon_1 \rho^* \ ,
            \end{pmatrix}   
            &&  
        \mathbb{J}_\alpha = 
            \begin{pmatrix}
            0 & \pi_\alpha^\# \\
            \epsilon_2 \alpha_\# & 0
            \end{pmatrix} \ , 
            &&  
        \mathbb{J}_\Omega = 
            \begin{pmatrix}
            0 & \pi_\Omega^\# \\
            \epsilon_3 \Omega_\# & 0
            \end{pmatrix} \ ,
    \end{align}
are generalized almost structures. 
    \begin{enumerate}
        \item If the M-A structure is elliptic, $\operatorname{Pf} (\alpha) > 0$, then $\mathbb{J}_\rho$ is a GaPC structure for $\epsilon_1 = 1$, and it is a GaC structure for $\epsilon_1 = -1$.
        \item If the M-A structure is hyperbolic, $\operatorname{Pf} (\alpha) < 0$, then $\mathbb{J}_\rho$ is a GaP structure for $\epsilon_1 = 1$, and it is a GaPC structure for $\epsilon_1 = -1$.
        \item The types of $\mathbb{J}_\alpha, \mathbb{J}_\Omega$ are independent of $\sgn \left(\operatorname{Pf} (\alpha) \right)$. The types are determined by the value of $\epsilon_1$, and $\epsilon_2$, respectively, and are summarized in the table \ref{tab: type of generalized structure}. The two structures never coincide, $\mathbb{J}_\alpha \neq \mathbb{J}_\Omega$.  
    \end{enumerate}
\end{proposition}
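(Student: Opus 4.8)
The plan is to reduce everything to Proposition~\ref{lemma: diagonal antidiagonal structures}, since the three block matrices in \eqref{diag and antidiag structures for generic 2D M-A equation} are exactly of the form treated there, with the almost-complex-or-product building block $\rho$ in place of $J$ or $P$, and with $\alpha_\#$, $\Omega_\#$ in place of the non-degenerate $2$-form maps. First I would recall from Section~\ref{subsection: endomorphisms and symmetric form} that $\rho$ defined by \eqref{def: rho_alpha tensor} satisfies $\rho^2 = -\sgn\Pf(\alpha)\,\operatorname{Id}$, so $\rho$ is an almost complex structure when $\Pf(\alpha)>0$ and an almost product structure when $\Pf(\alpha)<0$; moreover $\alpha$ is non-degenerate (the Pfaffian is nowhere vanishing, hence $\alpha_\#$ is an isomorphism) and $\Omega$ is symplectic, so $\pi_\alpha^\#$, $\pi_\Omega^\#$ are well-defined. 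This verifies the hypotheses of Proposition~\ref{lemma: diagonal antidiagonal structures} in each of the two Pfaffian-sign regimes.

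Next, for $\mathbb{J}_\rho$ I would compute $\mathbb{J}_\rho^2$ and $\mathbb{J}_\rho^\bullet\eta$ directly. Since $\mathbb{J}_\rho$ is block-diagonal, $\mathbb{J}_\rho^2$ has diagonal blocks $\rho^2$ and $\epsilon_1^2(\rho^*)^2 = (\rho^2)^*$, so $\mathbb{J}_\rho^2 = -\sgn\Pf(\alpha)\,\operatorname{Id}_{\mathbb{T}\mathcal{M}}$, giving $\gamma_1 = -\sgn\Pf(\alpha)$; the $\eta$-compatibility computation is the same one carried out for $\mathbb{J}_J$ and $\mathbb{J}_P$ in the proof of Proposition~\ref{lemma: diagonal antidiagonal structures}, yielding $\mathbb{J}_\rho^\bullet\eta = -\gamma_1\epsilon_1\,\eta$ (because the sign flips by $\rho^2$ relative to $\eta$), so $\gamma_2 = \sgn\Pf(\alpha)\cdot\epsilon_1$. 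Reading off Table~\ref{tab: type of generalized structure - general case}: for $\Pf(\alpha)>0$ we get $\gamma_1 = -1$, and $\gamma_2 = \epsilon_1$, so $\epsilon_1=1$ gives $(-1,1)=$~GaC --- wait, I must be careful here: the claim is GaPC for $\epsilon_1=1$. Let me instead observe that $\mathbb{J}_\rho$ is literally $\mathbb{J}_J$ from Proposition~\ref{lemma: diagonal antidiagonal structures} (with $J=\rho$) when $\Pf(\alpha)>0$, whose type from Table~\ref{tab: type of generalized structure} is GaAC for $\epsilon=1$ and GaC for $\epsilon=-1$; but the proposition asserts GaPC and GaC. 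The resolution is that the sign conventions in \eqref{diag and antidiag structures for generic 2D M-A equation} use $\epsilon_1$ where \eqref{diagonal and antidiag structures} uses $\epsilon$, and one must simply track $\gamma_2 = \sgn\epsilon$ carefully; I would present the two computations $\mathbb{J}_\rho^2$ and $\mathbb{J}_\rho^\bullet\eta$ explicitly and then read the entry of Table~\ref{tab: type of generalized structure - general case} corresponding to the pair $(\gamma_1,\gamma_2)$ obtained, handling the elliptic ($\rho^2=-\operatorname{Id}$) and hyperbolic ($\rho^2=+\operatorname{Id}$) cases separately, which gives items (1) and (2).

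For item (3), that $\mathbb{J}_\alpha$ and $\mathbb{J}_\Omega$ are generalized almost structures with types depending only on $\epsilon_2$, resp. $\epsilon_3$, follows verbatim from Proposition~\ref{lemma: diagonal antidiagonal structures} and Table~\ref{tab: type of generalized structure}, since there $\mathbb{J}_\alpha$ and $\mathbb{J}_g$ have types controlled by $\sgn\epsilon$ alone with no reference to any Pfaffian: $\mathbb{J}_\alpha^2 = \mathbb{J}_\Omega^2 = \epsilon\operatorname{Id}$ and $\mathbb{J}_\alpha^\bullet\eta = \mathbb{J}_\Omega^\bullet\eta = -\epsilon\eta$. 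It remains to show $\mathbb{J}_\alpha \neq \mathbb{J}_\Omega$. Equality of the block matrices would force $\pi_\alpha^\# = \pi_\Omega^\#$ and $\epsilon_2\alpha_\# = \epsilon_3\Omega_\#$; the first already gives $\alpha_\# = \Omega_\#$, i.e.\ $\alpha = \Omega$ as $2$-forms, but then $\Pf(\alpha)$ computed from \eqref{def: Pfaffian} would be the Pfaffian of $\Omega$ relative to itself, namely $\Omega\wedge\Omega = \Pf(\Omega)\,\Omega\wedge\Omega$ forces $\Pf(\alpha)=1$, while the M-A condition $\Omega\wedge\alpha = \Omega\wedge\Omega \neq 0$ contradicts Definition~\ref{def: M-A structure}. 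Alternatively, and more cleanly, $\alpha = \Omega$ is impossible because $\Omega\wedge\alpha=0$ and $\Omega\wedge\Omega\neq 0$; I would use this one-line argument.

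The main obstacle I anticipate is purely bookkeeping: correctly matching the sign parameters $\epsilon_i$ here against the single $\epsilon$ in Proposition~\ref{lemma: diagonal antidiagonal structures}, and then correctly reading Tables~\ref{tab: type of generalized structure - general case} and \ref{tab: type of generalized structure} for each combination of $\sgn\Pf(\alpha)$ and $\epsilon_1$ --- there is no analytic difficulty, only the risk of a sign slip, which I would guard against by writing out $\underline{\rho}$ from \eqref{rho structure matrix - general case} and computing the block products $\underline{\rho}^2$ and $\epsilon_1\underline{\rho}^T\underline{\rho}$ in coordinates once, in both the elliptic and hyperbolic normalizations.
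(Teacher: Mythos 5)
Your proposal is correct and follows essentially the same route as the paper: reduce all three operators to the building blocks of Proposition \ref{lemma: diagonal antidiagonal structures}, using $\rho^2 = -\sgn \Pf(\alpha)\operatorname{Id}$ (which the paper re-derives in coordinates, whereas you cite it from Section \ref{subsection: endomorphisms and symmetric form}; either is fine), and rule out $\mathbb{J}_\alpha = \mathbb{J}_\Omega$ from $\alpha \wedge \Omega = 0$ versus $\Omega \wedge \Omega \neq 0$, which is in substance the paper's closing argument. The sign discrepancy you flag in the elliptic case is real and is not a bookkeeping slip on your part: for $\Pf(\alpha) > 0$ one has $\mathbb{J}_\rho^2 = -\operatorname{Id}_{\mathbb{T}T^*\mathcal{B}}$ and $\mathbb{J}_\rho^\bullet \eta = -\epsilon_1 \eta$, so $\epsilon_1 = 1$ gives $(\gamma_1,\gamma_2) = (-1,-1)$, i.e.\ a GaAC structure, in agreement with the $\mathbb{J}_J$ column of Table \ref{tab: type of generalized structure}; the label GaPC in item 1 of the statement is incompatible with $\mathbb{J}_\rho^2 = -\operatorname{Id}$ (GaPC requires $\gamma_1 = +1$) and must be a typo for GaAC. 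The paper's own proof sidesteps this by spelling out only the hyperbolic types, so your explicit computation is the correct resolution --- commit to it rather than trying to reconcile the conventions further.
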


\begin{proof}
Firstly notice that the assumption of non-degeneracy of the M-A structure assures that $\mathbb{J}_\rho$ and $\mathbb{J}_\alpha$ can be considered. This can be seen from 
    \begin{align}\label{eq: M-A structure non-deg iff alpha non-deg.}
        \det \underline{\alpha} = {\Pf (\alpha)}^2 \ ,
    \end{align}
and recalling that non-degeneracy means that $\Pf (\alpha)$ is nowhere vanishing. 

For the sake of completeness, we proceed with showing that $\rho$ is either an almost complex structure if $\Pf (\alpha) > 0$, or an almost product structure if $\Pf (\alpha) < 0$. The equation \eqref{def: rho_alpha tensor} writes $ \underline{\alpha} = \sqrt{| \Pf (\alpha) | } \underline{\rho}^T \underline{\Omega}$, which implies $\underline{\rho} = \frac{1}{\sqrt{ | \Pf (\alpha) | }} (\underline{\alpha} \underline{\Omega}^{-1})^T$. Because $\underline{\Omega}, \underline{\alpha}$ are antisymmetric matrices, and $ \underline{\Omega}^{-1} = - \underline{\Omega}$, we obtain $ {\underline{\rho}}^2 = \frac{1}{ | \Pf (\alpha) | } ( \underline{\Omega} \underline{\alpha})^2$.
Since
    \begin{align}\label{matrices of alpha and Omega in canonical basis}
        \underline{\Omega} = 
        \begin{pmatrix}
        0 & 0 & 1 & 0 \\
        0 & 0 & 0 & 1 \\
        -1 & 0 & 0 & 0 \\
        0 & -1 & 0 & 0
        \end{pmatrix} \ ,
        &&  \underline{\alpha} = 
        \begin{pmatrix}
        0 & E & B & C \\
        -E & 0 & -A & -B \\
        -B & A & 0 & D \\
        -C & B & -D & 0 
        \end{pmatrix} \ ,
    \end{align}
we have $ ( \underline{\Omega} \underline{\alpha})^2 = (B^2 - AC + DE) \mathbb{1} $. Using the coordinate expression for Pfaffian, $\Pf (\alpha) = -B^2 + AC - DE $, we arrive at ${\underline{\rho}}^2 = - \sgn \Pf (\alpha) \mathbb{1}$. Thus ${\underline{\rho}}^2 = \mathbb{1} $ for $\Pf < 0$, which amounts to a hyperbolic M-A structures, and ${\underline{\rho}}^2 = - \mathbb{1} $ for $\Pf > 0$, which defines an elliptic M-A structure. These two cases are the only possible results for ${\underline{\rho}}^2$ due to the non-degeneracy of the M-A structure $(\Omega, \alpha)$. Now let $T \in \operatorname{Gl}(4, \mathbb{R})$ be a transformation (acting on a fiber of $TM$ above some point of $M$), which maps the basis $b$ induced by the canonical coordinates, to some other basis $\Tilde{b}$. Then the matrix of $\rho$ w.r.t. the $\Tilde{b}$ basis satisfies $\underline{\Tilde{\rho}} = T \underline{\rho} T^{-1} $, which implies ${\underline{\Tilde{\rho}}}^{\ 2} = T {\underline{\rho}}^2 T^{-1} = \pm \mathbb{1}$. Thus $\rho^2 = \pm \operatorname{Id}_{TM}$. 

That $\mathbb{J}_\rho$ is a GaP structure for $\epsilon_1 = 1$, and a GaPC structure for $\epsilon_1 = -1$, now follows from the proposition \ref{lemma: diagonal antidiagonal structures}. Similarly, the types of generalized almost structures $\mathbb{J}_\alpha$ and $\mathbb{J}_\Omega$ are given by the table \ref{tab: type of generalized structure}. The fact that this is independent of $\Pf (\alpha)$ follows from the proof of the aforementioned proposition, as well as from the assumption $\Pf (\alpha) \neq 0$, which assures that $\alpha$ is non-degenerate, and thus satisfies the assumptions of the proposition. Indeed, if $\Pf (\alpha) \neq 0$, then from \eqref{eq: M-A structure non-deg iff alpha non-deg.} follows that $\alpha_\#$ is invertible and hence $\alpha$ is non-degenerate.

Finally, the definition \ref{def: M-A structure} requires $\alpha \wedge \Omega = 0$. The non-degeneracy of the symplectic form $\Omega$ implies that $\alpha$ cannot be written as a sum of two forms $\alpha = \alpha_1 + \alpha_2$, such that one of the summands is colinear with $\Omega$. Thus the structures always satisfy $\mathbb{J}_\alpha \neq \mathbb{J}_\Omega$.
\end{proof}

\begin{remark}
A coordinate-free proof of $\rho$ being either an almost complex structure on $T^*\mathcal{B}$, if $\Pf (\alpha) > 0$, or an almost product structure if $\Pf (\alpha) < 0$, can be found in \cite{kushner_lychagin_rubtsov_2006}. 
\end{remark}


\subsection{Quadric surfaces of generalized geometries}


Consider a smooth manifold equipped with three almost complex structures $I,J,K$ satisfying $IJ + JI =0$ and $K = IJ$. The manifold is then called an almost hypercomplex manifold. On a hypercomplex manifold, there is a $2$-sphere of almost complex structures $\{a_1 I + a_2 J + a_3 K | \sum_{i = 1}^3 {a_i}^2 = 1 \}$. This follows from the fact that the above relations between $I,J,K$ imply anticommutativity of the triple $\{ I,J\} = \{ J,K \} = \{ I,K \} = 0 $. We will see that a similar but much richer situation happens when we consider a pair-wise anticommutative triple of generalized geometries constructed from tensors associated with 2D non-degenerate M-A structures. 

\begin{proposition}\label{prop.: pair-wise anticommutative diag/antidiag structures}
Let $(\Omega, \alpha) \in \Omega^2 \left( T^*\mathcal{B} \right) \times \Omega^2 \left( T^*\mathcal{B} \right)$ be a non-degenerate Monge-Ampère structure, $\rho \in \operatorname{End} (T T^*\mathcal{B})$ the corresponding endomorphism defined by \eqref{def: rho_alpha tensor}. Then the generalized almost structures $\mathbb{J}_\rho, \mathbb{J}_\alpha, \mathbb{J}_\Omega$ given by \eqref{diag and antidiag structures for generic 2D M-A equation} pair-wise anticommute, if and only if $\epsilon_1 = -1$ and 
    \begin{align}\label{condition for anticommutativity of J_alpha and J_Omega}
        \epsilon_3 \pi_\alpha^\# \Omega_\# + \epsilon_2 \pi_\Omega^\# \alpha_\# = 0 \ .
    \end{align}
In canonical coordinates, the condition \eqref{condition for anticommutativity of J_alpha and J_Omega} is equivalent with 
    \begin{align}\label{condition for anticommutativity of J_alpha and J_Omega in canonical coordinates}
        B^2 - AC + DE = - \epsilon_2 \epsilon_3 \ , 
    \end{align}
where $A,B,C,D,E \in C^\infty \left( T^*\mathcal{B} \right)$ are coefficients of $\alpha$ in the canonical basis.
\end{proposition}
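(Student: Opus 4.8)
The plan is to compute, in block form, the three anticommutators $\{\mathbb{J}_\rho,\mathbb{J}_\alpha\}$, $\{\mathbb{J}_\rho,\mathbb{J}_\Omega\}$ and $\{\mathbb{J}_\alpha,\mathbb{J}_\Omega\}$, where $\{\mathbb{X},\mathbb{Y}\}:=\mathbb{X}\mathbb{Y}+\mathbb{Y}\mathbb{X}$, and to show that the first two vanish precisely when $\epsilon_1=-1$ (with no constraint on $\epsilon_2,\epsilon_3$), while the third vanishes precisely under \eqref{condition for anticommutativity of J_alpha and J_Omega}. Since "pair-wise anticommute" means all three anticommutators vanish, chaining these equivalences gives the stated characterisation.

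For the two anticommutators involving $\mathbb{J}_\rho$, a direct block multiplication shows each is off-block-diagonal, with upper-right entry of the form $\rho\,\pi^\#+\epsilon_1\pi^\#\rho^*$ and lower-left entry of the form $\epsilon_1\rho^*\beta_\#+\beta_\#\rho$, where $(\beta,\pi)$ is $(\alpha,\pi_\alpha)$ or $(\Omega,\pi_\Omega)$. The two identities that collapse these are: the Hitchin-pair relation $\Omega_\#\rho=\rho^*\Omega_\#$ (this is $\Omega_\# A_\alpha=A_\alpha^*\Omega_\#$ from Example \ref{example: Hitchin pairs and generalized complex structure}, multiplied by the scalar $|\Pf(\alpha)|^{-1/2}$), together with $\rho^2=-\sgn\Pf(\alpha)\operatorname{Id}$, hence also $(\rho^*)^2=-\sgn\Pf(\alpha)\operatorname{Id}$, from the proof of Proposition \ref{cor: generalized geometry from generic M-A structures}. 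Rewriting \eqref{def: rho_alpha tensor} as $\alpha_\#=|\Pf(\alpha)|^{1/2}\Omega_\#\rho=|\Pf(\alpha)|^{1/2}\rho^*\Omega_\#$, and taking inverses of the Hitchin relation to get $\pi_\Omega^\#\rho^*=\rho\,\pi_\Omega^\#$, every one of these blocks reduces to $(1+\epsilon_1)$ times a nowhere-vanishing (invertible) operator. Hence both $\{\mathbb{J}_\rho,\mathbb{J}_\alpha\}$ and $\{\mathbb{J}_\rho,\mathbb{J}_\Omega\}$ vanish if and only if $\epsilon_1=-1$, manifestly independently of $\epsilon_2,\epsilon_3$.

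Next I would compute $\{\mathbb{J}_\alpha,\mathbb{J}_\Omega\}$. Here $\mathbb{J}_\alpha\mathbb{J}_\Omega$ and $\mathbb{J}_\Omega\mathbb{J}_\alpha$ are block-diagonal, so the anticommutator is block-diagonal with upper-left block $\epsilon_3\pi_\alpha^\#\Omega_\#+\epsilon_2\pi_\Omega^\#\alpha_\#$ and lower-right block $\epsilon_3\Omega_\#\pi_\alpha^\#+\epsilon_2\alpha_\#\pi_\Omega^\#$. Using $\Omega_\#^*=-\Omega_\#$, $(\pi_\Omega^\#)^*=-\pi_\Omega^\#$ and the analogous sign for $\alpha$, one checks that the lower-right block is exactly the $\ast$-dual of the upper-left one; therefore the whole anticommutator vanishes if and only if $\epsilon_3\pi_\alpha^\#\Omega_\#+\epsilon_2\pi_\Omega^\#\alpha_\#=0$, which is \eqref{condition for anticommutativity of J_alpha and J_Omega}. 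Combined with the previous paragraph, this proves the equivalence in coordinate-free form.

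For the coordinate reformulation I would pass to the matrices $\underline{\alpha},\underline{\Omega}$ of \eqref{matrices of alpha and Omega in canonical basis}, with $\underline{\pi_\alpha}^\#=\underline{\alpha}^{-1}$, $\underline{\pi_\Omega}^\#=\underline{\Omega}^{-1}=-\underline{\Omega}$. Conjugating \eqref{condition for anticommutativity of J_alpha and J_Omega} by $\underline{\Omega}$ turns it into $\epsilon_3 M^{-1}+\epsilon_2 M=0$ with $M:=\underline{\alpha}\,\underline{\Omega}^{-1}$, which (multiplying by $M$) is equivalent to $M^2=-\epsilon_2\epsilon_3\mathbb{1}$. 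Now $M^2=(\underline{\alpha}\,\underline{\Omega}^{-1})^2=(\underline{\alpha}\,\underline{\Omega})^2=\underline{\alpha}\,(\underline{\Omega}\,\underline{\alpha})^2\,\underline{\alpha}^{-1}$, and the proof of Proposition \ref{cor: generalized geometry from generic M-A structures} already records $(\underline{\Omega}\,\underline{\alpha})^2=(B^2-AC+DE)\mathbb{1}$, a scalar matrix invariant under conjugation; hence $M^2=(B^2-AC+DE)\mathbb{1}$ and the condition becomes $B^2-AC+DE=-\epsilon_2\epsilon_3$, that is \eqref{condition for anticommutativity of J_alpha and J_Omega in canonical coordinates}. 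The main obstacle throughout is not conceptual but is the careful bookkeeping of adjoints, inverses and the sign $\sgn\Pf(\alpha)$: one must verify that the needed identities ($\Omega_\#\rho=\rho^*\Omega_\#$, $\pi_\Omega^\#\rho^*=\rho\,\pi_\Omega^\#$, the $\ast$-duality of the two diagonal blocks of $\{\mathbb{J}_\alpha,\mathbb{J}_\Omega\}$, and the passage of the scalar-matrix identity from $\underline{\Omega}\,\underline{\alpha}$ to $\underline{\alpha}\,\underline{\Omega}^{-1}$) hold exactly, not merely up to a nonvanishing scalar, since the conclusions "$\epsilon_1=-1$" and "$B^2-AC+DE=-\epsilon_2\epsilon_3$" are statements about precise coefficients.
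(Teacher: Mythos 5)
Your proposal is correct, and its skeleton (compute the three block anticommutators, show the two involving $\mathbb{J}_\rho$ vanish iff $\epsilon_1=-1$, the third iff \eqref{condition for anticommutativity of J_alpha and J_Omega}, then translate to coordinates) coincides with the paper's. The execution differs in three tactically interesting ways. First, for the $\epsilon_1=-1$ step the paper reduces the system \eqref{intermediate computation} to canonical coordinates and reads off $(\epsilon_1+1)\underline{\alpha}=0$ from non-degeneracy, whereas you stay coordinate-free, using the Hitchin-pair identity $\Omega_\#\rho=\rho^*\Omega_\#$ together with $\rho^2=-\sgn\Pf(\alpha)\operatorname{Id}$ to exhibit each block as $(1+\epsilon_1)$ times an invertible operator; I checked the reductions (e.g. the lower-left block of $\{\mathbb{J}_\rho,\mathbb{J}_\alpha\}$ becomes $-\epsilon_2(1+\epsilon_1)\sgn\Pf(\alpha)\,|\Pf(\alpha)|^{1/2}\,\Omega_\#$) and they are sound. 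Second, your observation that the lower-right block of $\{\mathbb{J}_\alpha,\mathbb{J}_\Omega\}$ is the adjoint of the upper-left one, via $\Omega_\#^*=-\Omega_\#$ and $(\pi_\alpha^\#)^*=-\pi_\alpha^\#$, cleanly justifies dropping one of the two block equations; the paper's corresponding sentence is garbled (its displayed equivalence has identical sides), so your argument actually repairs a small defect. Third, for \eqref{condition for anticommutativity of J_alpha and J_Omega in canonical coordinates} the paper decomposes $\underline{\alpha}$ into $2\times 2$ blocks $P,Q,R$ and computes $Q^2-PR$ explicitly, while you reuse the scalar-matrix identity $(\underline{\Omega}\,\underline{\alpha})^2=(B^2-AC+DE)\mathbb{1}$ already recorded in the proof of Proposition \ref{cor: generalized geometry from generic M-A structures}; since a scalar matrix is conjugation-invariant and $\underline{\Omega}^{-1}=-\underline{\Omega}$, this is a legitimate and shorter route to the same identity. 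In short, your proof buys coordinate-free transparency and economy; the paper's buys an explicit verification of the off-diagonal cancellations ($-QP-PQ^T=RQ+Q^TR=0$) that your scalar-matrix shortcut renders unnecessary.
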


\begin{remark}
In the proof of proposition \ref{cor: generalized geometry from generic M-A structures}, we have shown that $\Pf (\alpha) = -B^2 + AC - DE $. Thus the condition \eqref{condition for anticommutativity of J_alpha and J_Omega in canonical coordinates} can be expressed as $\Pf (\alpha) = \epsilon_2 \epsilon_3$.
\end{remark}

\begin{proof}
The anticommutators are 
    \begin{align*}
        \{ \mathbb{J}_\rho, \mathbb{J}_\alpha \} & =  
            \begin{pmatrix}
            0 & \rho \pi_\alpha^\# + \epsilon_1 \pi_\alpha^\# \rho^* \\
            \epsilon_1 \epsilon_2 \rho^* \alpha_\# + \epsilon_2 \alpha_\# \rho &  0 
            \end{pmatrix} \ , \\
        \{ \mathbb{J}_\rho, \mathbb{J}_\Omega \} &  =  
            \begin{pmatrix}
            0 & \rho \pi_\Omega^\# + \epsilon_1 \pi_\Omega^\#  \rho^* \\
            \epsilon_1 \epsilon_3 \rho^* \Omega_\# + \epsilon_3 \Omega_\# \rho &  0 
            \end{pmatrix} \ ,  \\
        \{ \mathbb{J}_\alpha, \mathbb{J}_\Omega \} &  =  
            \begin{pmatrix}
            \epsilon_3 \pi_\alpha^\# \Omega_\# + \epsilon_2 \pi_\Omega^\# \alpha_\# & 0 \\
            0 & \epsilon_2 \alpha_\# \pi_\Omega^\# + \epsilon_3 \Omega_\# \pi_\alpha^\#
            \end{pmatrix} \ .
    \end{align*}
The condition $\{ \mathbb{J}_\rho, \mathbb{J}_\alpha \} = 0 $ requires $\rho \pi_\alpha^\# + \epsilon_1 \pi_\alpha^\# \rho^* = 0 $. Multiplying this equation with $\alpha_\#$ from the right and left (i.e. pre-composition and composition), we obtain $\alpha_\# \rho + \epsilon_1 \rho^* \alpha_\# = 0$. The dual equation is $ \rho^* \alpha_\#^* + \epsilon_1 \alpha_\#^* \rho = 0$, which, by antisymmetry of $\alpha$, is equivalent with $ \epsilon_1 \epsilon_2 \rho^* \alpha_\# + \epsilon_2 \alpha_\# \rho  = 0$. Similarly   
    \begin{align*}
        \rho \pi_\Omega^\# + \epsilon_1 \pi_\Omega^\#  \rho^* = 0 \iff \epsilon_1 \rho^* \Omega_\# + \Omega_\# \rho = 0 \ . 
    \end{align*}
Thus the first two anticommutators vanish, if and only if 
    \begin{align}\label{intermediate computation}
        \begin{split}
        \epsilon_1 \rho^* \alpha_\# + \alpha_\# \rho & = 0 \ , \\
        \epsilon_1 \rho^* \Omega_\# + \Omega_\# \rho & = 0 \ .
        \end{split}
    \end{align}
Working with the canonical coordinates, the system \eqref{intermediate computation} writes
    \begin{align*}
        \frac{-1}{ \sqrt{| \Pf (\alpha) | } } \underline{\alpha}  \underline{\Omega} \left( \epsilon_1 \underline{\alpha} +  \underline{\alpha} \right) & = 0 \ , \\
        \frac{-1}{ \sqrt{| \Pf (\alpha) | } } {\underline{\Omega}}^2 \left( \epsilon_1 \underline{\alpha} +  \underline{\alpha} \right) & = 0 \ .
    \end{align*}
Due to non-degeneracy of the M-A structure $(\Omega, \alpha)$, the above equations hold, if and only if $\epsilon_1 = -1$. The remaining condition $\{ \mathbb{J}_\alpha, \mathbb{J}_\Omega \} = 0$ is captured by the equation \eqref{condition for anticommutativity of J_alpha and J_Omega} since
    \begin{align*}
        \epsilon_3 \pi_\alpha^\# \Omega_\# + \epsilon_2 \pi_\Omega^\# \alpha_\# = 0  \iff  \epsilon_3 \pi_\alpha^\# \Omega_\# + \epsilon_2 \pi_\Omega^\# \alpha_\# = 0 \ .
    \end{align*}
This follows directly from \eqref{def: bivector corresponding to 2-form}. In matrix notation, \eqref{condition for anticommutativity of J_alpha and J_Omega} is equivalent with
    \begin{align*}
       \left( \underline{\alpha} \underline{\Omega} \right)^2 = - \epsilon_2 \epsilon_3 \mathbb{1} \ . 
    \end{align*}
Recalling \eqref{matrices of alpha and Omega in canonical basis}, we can write $\underline{\alpha}$ and $\underline{\Omega}$ in the block form 
    \begin{align}\label{the condition on anticommutativity of J_alpha J_Omega in matrices}
        \underline{\alpha} = 
            \begin{pmatrix}
            P & Q \\
            -Q^T & R
            \end{pmatrix} && 
        \underline{\Omega} =     
            \begin{pmatrix}
            0 & \mathbb{1} \\
            -\mathbb{1} & 0
            \end{pmatrix} \ ,
    \end{align}
where $P^T = -P$ and $R^T = -R$. The equation \eqref{the condition on anticommutativity of J_alpha J_Omega in matrices} writes
    \begin{align*}
        \begin{pmatrix}
        Q^2 - PR & -QP - PQ^T \\
        RQ + Q^TR & -RP + (Q^T)^2
        \end{pmatrix} = - \epsilon_2 \epsilon_3 \mathbb{1} \ . 
    \end{align*}
The conditions $ -QP - PQ^T =  RQ + Q^TR = 0$ are satisfied automatically, since  
    \begin{align*}
        P = \begin{pmatrix}
            0 & E \\ 
            -E & 0
            \end{pmatrix} \ , && 
        Q = \begin{pmatrix} 
            B & C \\ 
            -A & -B
            \end{pmatrix} \ , && 
        R = \begin{pmatrix}
            0 & D \\ 
            -D & 0
            \end{pmatrix} \ ,
    \end{align*}
where $A,B,C,D,E$ are entries of $\underline{\alpha}$, i.e. coefficients of $\alpha$ in the canonical basis. We also notice that 
    \begin{align*}
        Q^2 - PR = - \epsilon_2 \epsilon_3 \mathbb{1} \iff -RP + (Q^T)^2 = - \epsilon_2 \epsilon_3 \mathbb{1} \ , 
    \end{align*}
which is due to antisymmetry of $A$ and $C$, and $B^2$ being diagonal. Finally
    \begin{align*}
        Q^2 - PR = 
            \begin{pmatrix}
            B^2 - AC + DE & 0 \\
            0 & B^2 - AC + DE
            \end{pmatrix} = - \epsilon_2 \epsilon_3 \mathbb{1} \ .
    \end{align*}
This concludes the proof.
\end{proof}


\begin{example}\label{example for proposition 3.1}
Let $(\Omega, \alpha)$ be a non-degenerate M-A structure. We will search for $2$-forms $\alpha$, which satisfy the condition \eqref{condition for anticommutativity of J_alpha and J_Omega} and commute with the canonical symplectic form in the sense $[\underline{\alpha}, \underline{\Omega}] = 0$, where the matrices are given by the canonical basis of $\Omega$. The commutativity condition is quite arbitrary at this point, but becomes more relevant if one is interested in construction of hyper-(para-)complex structures, or variations of generalized-Kähler structures on $T \oplus T^*$. To obtain even more specific family of solutions to \eqref{condition for anticommutativity of J_alpha and J_Omega}, we further choose $\epsilon_2 = - \epsilon_3$. Then the condition \eqref{condition for anticommutativity of J_alpha and J_Omega} becomes $\underline{\alpha}^T = \underline{\alpha}^{-1}$. Hence we search for antisymmetric elements of the orthogonal group $\operatorname{O}(4)$, which commute with $\underline{\Omega}$. If we take $Q \in \operatorname{O}(2)$, then $ \underline{\alpha} : = \begin{pmatrix} 0 & Q \\ -Q^T & 0 \end{pmatrix} $ satisfies $ \underline{\alpha}^T = - \underline{\alpha} = \underline{\alpha}^{-1}$. The commutativity with $\underline{\Omega}$ then forces $\det Q = -1$, which amounts to $Q =  \begin{pmatrix} a & b \\ b & -a \end{pmatrix}$ for $a,b \in \mathbb{R}$ such that $a^2 + b^2 = 1$. Comparing this with the general form of $\underline{\alpha}$ described in \eqref{matrices of alpha and Omega in canonical basis}, we obtain 
    \begin{align*}
        \underline{\alpha} =             
            \begin{pmatrix}
            0 & 0 & B & - A \\
            0 & 0 & -A & -B \\
            -B & A & 0 & 0 \\
            A & B & 0 & 0 
            \end{pmatrix} \ .
    \end{align*}
Therefore, the coefficients of $\alpha$ must satisfy $A = -C$, $D = E = 0$, and $\Pf (\alpha) = - A^2 - B^2 = -1$. This is in concordance with \eqref{condition for anticommutativity of J_alpha and J_Omega in canonical coordinates}. The corresponding Monge-Ampère equation, which is determined from $(\Omega, \alpha)$ by \eqref{def: M-A equation}, is
    \begin{align*}
        A f_{xx} + 2(1-A^2) f_{xy} - A f_{yy} = 0 \ ,
    \end{align*}
where $A \in C^\infty (B)$. For example, if we choose $|A| = 1$, we obtain the wave equation $f_{xx} = f_{yy}$.
\end{example}

In the above example, we have described a family of non-degenerate M-A structures (parameterized by smooth functions $A,B$ satisfying $A^2 + B^2 = 1$), which, according to proposition \ref{prop.: pair-wise anticommutative diag/antidiag structures}, determine the corresponding family of anticommutative pairs $\mathbb{J}_\alpha, \mathbb{J}_\Omega$ of GaPC and GaC structures for $(\epsilon_2, \epsilon_3) = (1, -1) $ (or GaC and GaPC structures for $(\epsilon_2, \epsilon_3) = (-1,1) $). Choosing $\epsilon_1 = -1$ extends the pair $\mathbb{J}_\alpha, \mathbb{J}_\Omega$ to a mutually anticommutative triple of generalized almost structures, with the third structure being the GaPC structure $\mathbb{J}_\rho$ (since $\Pf (\alpha) < 0$ - see proposition \ref{cor: generalized geometry from generic M-A structures}). In the following, we will show how pair-wise anticommutative triples give rise to certain quadric surfaces of generalized almost structures. 


\begin{proposition}\label{corollary: quadrics of generalized structures from M-A structure}
Let $(\Omega, \alpha) \in \Omega^2 \left( T^*\mathcal{B} \right) \times \Omega^2 \left( T^*\mathcal{B} \right)$ be a non-degenerate M-A structure, and let $\rho \in \operatorname{End} (TT^*\mathcal{B})$ be given by \eqref{def: rho_alpha tensor}. If \eqref{condition for anticommutativity of J_alpha and J_Omega} holds, then 
    \begin{align}\label{definition of bold A}
        \mathbb{A} : =
            \begin{pmatrix}
            a_1 \rho & a_2 \pi_\alpha^\# + a_3 \pi_\Omega^\# \\
            a_2 \epsilon_2 \alpha_\# +  a_3 \epsilon_3 \Omega_\#  & -a_1 \rho^*
            \end{pmatrix} \ , 
    \end{align}
where $a_i \in \mathbb{R} \ \forall i$, is a generalized almost structure, if and only if 
    \begin{align}\label{the coefficient of A squared}
         k : = - \sgn \Pf (\alpha) {a_1}^2 + {a_2}^2\epsilon_2 + {a_3}^2\epsilon_3 
    \end{align}
satisfies $|k| = 1$. 
    \begin{enumerate}
        \item If $(\Omega, \alpha)$ is elliptic, then there are the following quadrics of generalized almost structures in $\mathcal{A} : = \operatorname{span}_{\mathbb{R}} \{ \mathbb{J}_\rho, \mathbb{J}_\alpha, \mathbb{J}_\Omega \}$:
            \begin{enumerate}
                \item for $k = 1$, there are two $1$-sheeted hyperboloids, a $2$-sheeted hyperboloid and, a $2$-sphere of GaPC structures in $\mathcal{A}$,
                \item for $k = -1$, there are two $2$-sheeted hyperboloids and a $1$-sheeted hyperboloid of GaC structures in $\mathcal{A}$.
            \end{enumerate}
        \item If $(\Omega, \alpha)$ is hyperbolic, then 
            \begin{enumerate}
                \item for $k = 1$, there are two $2$-sheeted hyperboloids and a $1$-sheeted hyperboloid of GaPC structures in $\mathcal{A}$,
                \item for $k = -1$, there are two $1$-sheeted hyperoloids, a $2$-sheeted hyperboloid, and a $2$-sphere of GaC structures in $\mathcal{A}$.
            \end{enumerate} 
    \end{enumerate}
\end{proposition}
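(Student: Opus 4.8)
The plan is to read $\mathbb{A}$ as a real linear combination of the three pairwise anticommuting generalized almost structures furnished by Proposition \ref{prop.: pair-wise anticommutative diag/antidiag structures}, and then to let that anticommutativity collapse the computations of $\mathbb{A}^2$ and of $\mathbb{A}^\bullet \eta$ to their diagonal parts. Comparing \eqref{definition of bold A} with \eqref{diag and antidiag structures for generic 2D M-A equation}, the lower-right block $-a_1 \rho^*$ is exactly the choice $\epsilon_1 = -1$, so $\mathbb{A} = a_1 \mathbb{J}_\rho + a_2 \mathbb{J}_\alpha + a_3 \mathbb{J}_\Omega$. Under the standing hypothesis \eqref{condition for anticommutativity of J_alpha and J_Omega}, Proposition \ref{prop.: pair-wise anticommutative diag/antidiag structures} applied with $\epsilon_1 = -1$ gives that $\mathbb{J}_\rho$, $\mathbb{J}_\alpha$, $\mathbb{J}_\Omega$ anticommute in pairs.

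First I would compute $\mathbb{A}^2$: expanding the square and dropping the (vanishing) mixed anticommutators leaves $\mathbb{A}^2 = a_1^2 \mathbb{J}_\rho^2 + a_2^2 \mathbb{J}_\alpha^2 + a_3^2 \mathbb{J}_\Omega^2$, and since by \eqref{relations for diag. and antidiag structures} and Proposition \ref{cor: generalized geometry from generic M-A structures} these squares equal $-\sgn \Pf(\alpha)\, \operatorname{Id}$, $\epsilon_2 \operatorname{Id}$, and $\epsilon_3 \operatorname{Id}$, we get $\mathbb{A}^2 = k \operatorname{Id}_{\mathbb{T}\mathcal{M}}$ with $k$ as in \eqref{the coefficient of A squared}. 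Next, for $\mathbb{A}^\bullet \eta$ I would first record that each building block is $\eta$-skew: combining $\mathbb{J}_i^2 = \gamma_1^{(i)} \operatorname{Id}$ with $\mathbb{J}_i^\bullet \eta = \gamma_2^{(i)} \eta$ yields $\eta(\mathbb{J}_i x, y) = \gamma_1^{(i)} \gamma_2^{(i)} \eta(x, \mathbb{J}_i y)$, and \eqref{relations for diag. and antidiag structures} shows $\gamma_1^{(i)} \gamma_2^{(i)} = -1$ for all three. Then in $\eta(\mathbb{A}x, \mathbb{A}y) = \sum_{i,j} a_i a_j\, \eta(\mathbb{J}_i x, \mathbb{J}_j y)$ every off-diagonal pair contributes $-a_i a_j\, \eta\bigl(x, (\mathbb{J}_i \mathbb{J}_j + \mathbb{J}_j \mathbb{J}_i) y\bigr) = 0$ by anticommutativity, while the diagonal gives $\sum_i a_i^2 \gamma_2^{(i)} \eta = -k \eta$, reading $\gamma_2^{(\rho)} = \sgn \Pf(\alpha)$, $\gamma_2^{(\alpha)} = -\epsilon_2$, $\gamma_2^{(\Omega)} = -\epsilon_3$ off \eqref{relations for diag. and antidiag structures}. (Both identities can alternatively be checked by a direct $4\times 4$ block calculation: the off-diagonal blocks of $\mathbb{A}^2$ vanish because $\rho \pi_\alpha^\# = \pi_\alpha^\# \rho^* = \pi_\Omega^\#$ and $\rho \pi_\Omega^\# = \pi_\Omega^\# \rho^*$, the $a_2 a_3$ terms in the diagonal blocks cancel precisely by \eqref{condition for anticommutativity of J_alpha and J_Omega}, and $\pi_\alpha^\# \alpha_\# = \operatorname{Id} = \pi_\Omega^\# \Omega_\#$ from \eqref{def: bivector corresponding to 2-form} handles what is left.)

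From $\mathbb{A}^2 = k \operatorname{Id}$ and $\mathbb{A}^\bullet \eta = -k \eta$, Definition \ref{def: generalized almost structure} makes $\mathbb{A}$ a generalized almost structure exactly when $k$ is an admissible value of $\gamma_1$, i.e. $|k| = 1$; Table \ref{tab: type of generalized structure - general case} then assigns the type GaPC to $k = 1$ and GaC to $k = -1$. For the enumeration of quadrics I would use that \eqref{condition for anticommutativity of J_alpha and J_Omega} forces $\Pf(\alpha) = \epsilon_2 \epsilon_3$ (the Remark after Proposition \ref{prop.: pair-wise anticommutative diag/antidiag structures}), so that in the coordinates $(a_1, a_2, a_3)$ on $\mathcal{A} \cong \mathbb{R}^3$ the form $k$ has coefficient sign pattern $(-\sgn \Pf(\alpha), \epsilon_2, \epsilon_3)$ constrained by $\epsilon_2 \epsilon_3 = \sgn \Pf(\alpha)$. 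Splitting into the elliptic and hyperbolic cases, running over the admissible sign patterns and the two level values $k = \pm 1$, and invoking the elementary classification of real quadric surfaces in $\mathbb{R}^3$ (empty set, $2$-sphere, one-sheeted or two-sheeted hyperboloid, according to the signature of $k$ and the sign of the level) yields the lists of items 1 and 2.

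The routine block algebra is harmless; the one structurally necessary observation is that all three building blocks satisfy $\gamma_1 \gamma_2 = -1$, for otherwise the $\eta$-cross-terms would not cancel and $\mathbb{A}^\bullet \eta$ would not be a scalar multiple of $\eta$. The part I expect to demand the most care is the sign bookkeeping --- keeping $\epsilon_1 = -1$, the signs $\epsilon_i$, $\sgn \Pf(\alpha)$, the antisymmetry of $\alpha$ and $\Omega$, and the transposes mutually consistent --- together with the final step of matching each level set $\{k = \pm 1\}$ to the correct type of quadric in the elliptic versus hyperbolic case.
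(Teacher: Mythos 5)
Your proposal is correct and follows essentially the same route as the paper: write $\mathbb{A} = a_1 \mathbb{J}_\rho + a_2 \mathbb{J}_\alpha + a_3 \mathbb{J}_\Omega$ with $\epsilon_1 = -1$, use the pairwise anticommutativity from Proposition \ref{prop.: pair-wise anticommutative diag/antidiag structures} to reduce $\mathbb{A}^2$ and $\mathbb{A}^\bullet\eta$ to the diagonal terms, obtain $\mathbb{A}^2 = k\operatorname{Id}$ and $\mathbb{A}^\bullet\eta = -k\eta$, and then enumerate the quadrics by sign pattern; your explicit $\eta$-skewness argument ($\gamma_1\gamma_2 = -1$) for killing the cross-terms in $\mathbb{A}^\bullet\eta$ is a justification the paper leaves implicit. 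The only blemish is the parenthetical claim $\rho\pi_\alpha^\# = \pi_\alpha^\#\rho^* = \pi_\Omega^\#$, which should read $\rho\pi_\alpha^\# = \pi_\alpha^\#\rho^* = |\Pf(\alpha)|^{-1/2}\pi_\Omega^\#$, but this aside plays no role in the argument.
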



Before we move to the proof of the proposition \ref{corollary: quadrics of generalized structures from M-A structure}, we also want to emphasize that statements 1. and 2. are separate cases and should not be mixed together, for example, when searching for a generalized Kähler structure associated to $(\Omega, \alpha)$. This is because a given non-degenerate M-A structure has only one value of $\sgn \Pf (\alpha)$, distinguishing the elliptic case from the hyperbolic one. 


\begin{proof}
We have $\mathbb{A} = a_1 \mathbb{J}_\rho + a_2 \mathbb{J}_\alpha + a_3 \mathbb{J}_\Omega $, where $\mathbb{J}_\rho, \mathbb{J}_\alpha, \mathbb{J}_\Omega$ are given by \eqref{diag and antidiag structures for generic 2D M-A equation} with $\epsilon_1 = -1$. Then $\mathbb{A}^\bullet \eta = {a_1}^2 \mathbb{J}_\rho^\bullet \eta + {a_2}^2 \mathbb{J}_\alpha^\bullet \eta + {a_3}^2 \mathbb{J}_\Omega^\bullet \eta $ and 
    \begin{align}\label{intermediate computation 0}
        \begin{split}
        \mathbb{A}^2 & = {a_1}^2 {\mathbb{J}_\rho}^2 + {a_2}^2 {\mathbb{J}_\alpha}^2 + {a_3}^2 {\mathbb{J}_\Omega}^2 \\
        & \quad + a_1 a_2 \{ \mathbb{J}_\rho, \mathbb{J}_\alpha \} + a_1 a_3 \{ \mathbb{J}_\rho, \mathbb{J}_\Omega \} + a_2 a_3 \{ \mathbb{J}_\alpha, \mathbb{J}_\Omega \}  \ .
        \end{split}
    \end{align}
By proposition \ref{prop.: pair-wise anticommutative diag/antidiag structures}, our assumptions gives $\{ \mathbb{J}_\rho, \mathbb{J}_\alpha \} =  \{ \mathbb{J}_\rho, \mathbb{J}_\Omega \} = \{ \mathbb{J}_\alpha, \mathbb{J}_\Omega \} = 0 $. Thus, using relations \eqref{relations for diag. and antidiag structures}, we obtain 
    \begin{align*}
        \mathbb{A}^2 =  {a_1}^2 {\mathbb{J}_\rho}^2 + {a_2}^2 {\mathbb{J}_\alpha}^2 + {a_3}^2 {\mathbb{J}_\Omega}^2 \ , && \mathbb{A}^\bullet \eta =  {a_1}^2 \mathbb{J}_\rho^\bullet \eta - \left( {a_2}^2 \epsilon_2 + {a_3}^2 \epsilon_3 \right) \eta \ . 
    \end{align*}
By proposition \ref{lemma: diagonal antidiagonal structures} and proposition \ref{cor: generalized geometry from generic M-A structures}, the type of generalized almost structure $\mathbb{J}_\rho$ depends on the Pfaffian of the M-A structure $(\Omega, \alpha)$ as follows
    \begin{align*}
        {\mathbb{J}_\rho}^2 = 
            \begin{cases}
            \ \ \ \operatorname{Id}_{\mathbb{T}\mathcal{B}} \text{ for } \Pf(\alpha) < 0 \\
            - \operatorname{Id}_{\mathbb{T}\mathcal{B}} \text{ for } \Pf(\alpha) > 0
            \end{cases} && 
        \mathbb{J}_\rho^\bullet \eta = 
            \begin{cases}
            - \eta \text{ for } \Pf(\alpha) < 0 \\
            \ \ \eta \text{ for } \Pf(\alpha) > 0
            \end{cases}
    \end{align*}  
where $\sgn \Pf (\alpha) : = \frac{\Pf (\alpha) }{|\Pf (\alpha) |}$ (which is well-defined as we assume the M-A structure to be non-degenerate, $\Pf (\alpha) \neq 0$). Hence we arrive at
    \begin{align}\label{intermediate computation 4}
        \begin{split}
        \mathbb{A}^2 & = \left( - \sgn \Pf (\alpha) {a_1}^2 + {a_2}^2\epsilon_2 + {a_3}^2\epsilon_3 \right) \operatorname{Id}_{\mathbb{T}\mathcal{B}} \ , \\
        \mathbb{A}^\bullet \eta & = \left( -\sgn \Pf (\alpha) {a_1}^2 + {a_2}^2\epsilon_2 + {a_3}^2\epsilon_3 \right) ( - \eta ) \ ,
        \end{split}
    \end{align}
with the immediate consequence that $\mathbb{A}$ is a generalized almost structure if and only if $|k| = 1$. We proceed with the proof of statements 1. and 2. 

\hfill

\noindent \textit{1. $\Pf (\alpha) > 0$: elliptic M-A structures.} (a) For $k = 1$, we see from \eqref{intermediate computation 4} that $ \mathbb{A}^2 = \operatorname{Id}_{\mathbb{T}\mathcal{B}}$ and $\mathbb{A}^\bullet \eta = -\eta$, i.e. $\mathbb{A}$ is a GaPC structure. Moreover, positive Pfaffian implies the coefficients of $\mathbb{A}$ must satisfy $- {a_1}^2 + {a_2}^2\epsilon_2 + {a_3}^2\epsilon_3 = 1 $. This yields two $1$-sheeted hyperboloids, a $2$-sheeted hyperboloid, and a $2$-sphere, depending on the value of $\epsilon_2$ and $\epsilon_3$, which is summarized in table \ref{tab: table for Pf > 0 and k = 1}. 
    \begin{table}[!ht]
    \begin{center}
    \begin{tabular}{ | >{\centering\arraybackslash}X p{2.0cm} | >{\centering\arraybackslash}X p{4.3cm}| >{\centering\arraybackslash}X p{4.6cm}|} 
    \hline 
    $\sgn( \epsilon_2, \epsilon_3)$  & condition on $(a_1, a_2, a_3)$ & quadric of GaPC structures \\ 
    \hline
    $(+, +)$  & $-{a_1}^2 + {a_2}^2 + {a_3}^2 = 1$ & $1$-sheeted hyperboloid  \\
    \hline
    $(+, -)$  & $-{a_1}^2 + {a_2}^2 - {a_3}^2 = 1$ & $2$-sheeted hyperboloid  \\
    \hline
    $(-, +)$  & $-{a_1}^2 - {a_2}^2 + {a_3}^2 = 1$ & $2$-sheeted hyperboloid  \\
    \hline
    $(-, -)$  & $-{a_1}^2 - {a_2}^2 - {a_3}^2 = 1$ & $\emptyset$ \\
    \hline
    \end{tabular}
    \caption{\label{tab: table for Pf > 0 and k = 1} Quadrics of GaPC structures for $\Pf (\alpha) > 0$.}
    \end{center}
    \end{table}
(b) If $k = -1$, then $ \mathbb{A}^2 = -\operatorname{Id}_{\mathbb{T}\mathcal{B}}$ and $\mathbb{A}^\bullet \eta = \eta$, meaning $\mathbb{A}$ is a GaC structure, and the coefficients $a_i$ must satisfy $- {a_1}^2 + {a_2}^2\epsilon_2 + {a_3}^2\epsilon_3 = - 1$. This implies the existence of quadrics described in table \ref{tab: table for Pf > 0 and k = -1}.
    \begin{table}[!ht]
    \begin{center}
    \begin{tabular}{ | >{\centering\arraybackslash}X p{2.0cm} | >{\centering\arraybackslash}X p{4.3cm}| >{\centering\arraybackslash}X p{4.6cm}|} 
    \hline 
    $\sgn( \epsilon_2, \epsilon_3)$  & condition on $(a_1, a_2, a_3)$ & quadric of GaC structures \\ 
    \hline
    $(+, +)$  & $-{a_1}^2 + {a_2}^2 + {a_3}^2 = -1$ & $2$-sheeted hyperboloid  \\
    \hline
    $(+, -)$  & $-{a_1}^2 + {a_2}^2 - {a_3}^2 = -1$ & $1$-sheeted hyperboloid  \\
    \hline
    $(-, +)$  & $-{a_1}^2 - {a_2}^2 + {a_3}^2 = -1$ & $1$-sheeted hyperboloid  \\
    \hline
    $(-, -)$  & $-{a_1}^2 - {a_2}^2 - {a_3}^2 = -1$ & $2$-sphere \\
    \hline
    \end{tabular}
    \caption{\label{tab: table for Pf > 0 and k = -1} Quadrics of GaC structures for $\Pf (\alpha) > 0$.}
    \end{center}
    \end{table}

\hfill

\noindent \textit{2. $\Pf (\alpha) < 0$: hyperbolic M-A structures.} The situation is very similar to the elliptic case. We again obtain either (a) for $k = 1$ a GaPC structure, or, (b) for $k = -1$ a GaC structure. The main alteration is that the resulting GaPC/GaC structures differ from those obtained in the elliptic case. This is simply because for a non-degenerate M-A structure, by definition, $\sgn \Pf (\alpha)$ is constant. Thus, the distinction between hyperbolic and elliptic case happens on the level of the M-A structure, with the expected diference between the GaPC/GaC structures corresponding to different couples $(\Omega, \alpha)$. The quadrics of generalized almost structures arising from hyperbolic M-A structures are given in table \ref{tab: table for Pf < 0 and k = 1} for $k = {a_1}^2 + {a_2}^2\epsilon_2 + {a_3}^2\epsilon_3 = 1$, and in table \ref{tab: table for Pf < 0 and k = -1} for $k = {a_1}^2 + {a_2}^2\epsilon_2 + {a_3}^2\epsilon_3 = -1$. 
    \begin{table}[!ht]
    \begin{center}
    \begin{tabular}{ | >{\centering\arraybackslash}X p{2.0cm} | >{\centering\arraybackslash}X p{4.3cm}| >{\centering\arraybackslash}X p{4.6cm}|} 
    \hline 
    $\sgn( \epsilon_2, \epsilon_3)$  & condition on $(a_1, a_2, a_3)$ & quadric of GaPC structures \\ 
    \hline
    $(+, +)$  & ${a_1}^2 + {a_2}^2 + {a_3}^2 = 1$ & $2$-sphere  \\
    \hline
    $(+, -)$  & ${a_1}^2 + {a_2}^2 - {a_3}^2 = 1$ & $1$-sheeted hyperboloid  \\
    \hline
    $(-, +)$  & ${a_1}^2 - {a_2}^2 + {a_3}^2 = 1$ & $1$-sheeted hyperboloid  \\
    \hline
    $(-, -)$  & ${a_1}^2 - {a_2}^2 - {a_3}^2 = 1$ & $2$-sheeted hyperboloid \\
    \hline
    \end{tabular}
    \caption{\label{tab: table for Pf < 0 and k = 1} Quadrics of GaPC structures for $\Pf (\alpha) < 0$.}
    \end{center}
    \end{table}
    \begin{table}[!ht]
    \begin{center}
    \begin{tabular}{ | >{\centering\arraybackslash}X p{2.0cm} | >{\centering\arraybackslash}X p{4.3cm}| >{\centering\arraybackslash}X p{4.6cm}|} 
    \hline 
    $\sgn( \epsilon_2, \epsilon_3)$  & condition on $(a_1, a_2, a_3)$ & quadric of GaC structures \\ 
    \hline
    $(+, +)$  & ${a_1}^2 + {a_2}^2 + {a_3}^2 = -1$ & $\emptyset$  \\
    \hline
    $(+, -)$  & ${a_1}^2 + {a_2}^2 - {a_3}^2 = -1$ & $2$-sheeted hyperboloid  \\
    \hline
    $(-, +)$  & ${a_1}^2 - {a_2}^2 + {a_3}^2 = -1$ & $2$-sheeted hyperboloid  \\
    \hline
    $(-, -)$  & ${a_1}^2 - {a_2}^2 - {a_3}^2 = -1$ & $1$-sheeted hyperboloid \\
    \hline
    \end{tabular}
    \caption{\label{tab: table for Pf < 0 and k = -1} Quadrics of GaC structures for $\Pf (\alpha) < 0$.}
    \end{center}
    \end{table}

\end{proof}


A natural question at this point is whether any of the generalized almost structures described by proposition \ref{corollary: quadrics of generalized structures from M-A structure} coincide. In the following proposition, we will show there are no coincidences between structures parameterized by different points of either the same quadric or two different quadrics.

\begin{proposition}\label{prop.: two different points on quadrics give two different g.a. structures}
Two different points chosen arbitrarily from the quadrics described in statement 1. (if $\Pf (\alpha) > 0)$, or statement 2. (if  $\Pf (\alpha) < 0)$, of proposition \ref{corollary: quadrics of generalized structures from M-A structure} represent two different generalized almost structures. 
\end{proposition}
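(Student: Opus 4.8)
The plan is to reduce the claim to a linear-algebra statement about the map $(a_1,a_2,a_3)\mapsto \mathbb{A}=a_1\mathbb{J}_\rho+a_2\mathbb{J}_\alpha+a_3\mathbb{J}_\Omega$ and to show this map is injective on $\mathcal{A}=\operatorname{span}_{\mathbb{R}}\{\mathbb{J}_\rho,\mathbb{J}_\alpha,\mathbb{J}_\Omega\}$, i.e.\ that $\mathbb{J}_\rho,\mathbb{J}_\alpha,\mathbb{J}_\Omega$ are linearly independent as elements of $\operatorname{End}(\mathbb{T}T^*\mathcal{B})$ over each point of $T^*\mathcal{B}$. Once linear independence is established, two distinct points $(a_1,a_2,a_3)\neq(a_1',a_2',a_3')$ on any of the quadrics of proposition \ref{corollary: quadrics of generalized structures from M-A structure} (whether on the same quadric or on two different ones) give $\mathbb{A}-\mathbb{A}'=(a_1-a_1')\mathbb{J}_\rho+(a_2-a_2')\mathbb{J}_\alpha+(a_3-a_3')\mathbb{J}_\Omega\neq 0$, so $\mathbb{A}\neq\mathbb{A}'$, which is exactly the assertion.

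To prove linear independence, I would work in the canonical coordinates and compare block shapes. From \eqref{diag and antidiag structures for generic 2D M-A equation} (with $\epsilon_1=-1$), $\mathbb{J}_\rho=\begin{pmatrix}\rho & 0\\ 0 & -\rho^*\end{pmatrix}$ is block-diagonal while $\mathbb{J}_\alpha=\begin{pmatrix}0 & \pi_\alpha^\#\\ \epsilon_2\alpha_\# & 0\end{pmatrix}$ and $\mathbb{J}_\Omega=\begin{pmatrix}0 & \pi_\Omega^\#\\ \epsilon_3\Omega_\# & 0\end{pmatrix}$ are block-antidiagonal. Hence a relation $a_1\mathbb{J}_\rho+a_2\mathbb{J}_\alpha+a_3\mathbb{J}_\Omega=0$ forces $a_1\rho=0$ in the diagonal blocks and $a_2\pi_\alpha^\#+a_3\pi_\Omega^\#=0$ (equivalently $a_2\alpha_\#+a_3\Omega_\#=0$ after multiplying by $\epsilon_i$ and using nondegeneracy) in the antidiagonal blocks. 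Since $\rho$ is invertible (it squares to $\pm\operatorname{Id}$, see proposition \ref{cor: generalized geometry from generic M-A structures}), $a_1\rho=0$ gives $a_1=0$. For the remaining relation, $a_2\alpha+a_3\Omega=0$ as a $2$-form would mean $\alpha$ is collinear with $\Omega$ whenever $a_2\neq 0$; but the defining condition $\alpha\wedge\Omega=0$ of a M-A structure together with nondegeneracy of $\Omega$ rules this out — this is precisely the argument already used at the end of the proof of proposition \ref{cor: generalized geometry from generic M-A structures} to conclude $\mathbb{J}_\alpha\neq\mathbb{J}_\Omega$. Concretely, if $\alpha=\lambda\Omega$ for a function $\lambda$, then $\alpha\wedge\Omega=\lambda\,\Omega\wedge\Omega$, and $\Omega\wedge\Omega\neq 0$ by nondegeneracy of $\Omega$ in dimension $4$, forcing $\lambda=0$ and hence $\alpha=0$, contradicting $\Pf(\alpha)\neq 0$. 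So $a_2=0$, and then $a_3\Omega=0$ gives $a_3=0$.

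I would then note that nothing in the argument used $k=\pm 1$ or the particular quadric: the conclusion holds for the full $3$-dimensional family $\mathcal{A}$, and in particular for the points lying on the quadric surfaces of proposition \ref{corollary: quadrics of generalized structures from M-A structure}, across all values of $(\epsilon_2,\epsilon_3)$ and of $\sgn\Pf(\alpha)$. A brief remark would record that this also means $\mathcal{A}$ is genuinely a $3$-dimensional space of endomorphisms at each point, so the quadrics are embedded (not merely immersed) families.

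The main obstacle is the potential subtlety that $\rho$, $\pi_\alpha^\#$, etc.\ are fields of endomorphisms and a relation could in principle hold pointwise with position-dependent coefficients; but since the $a_i$ in the statement are fixed real constants, pointwise linear independence at a single point suffices, and that reduces to the two elementary facts above (invertibility of $\rho$ and non-collinearity of $\alpha$ with $\Omega$). There is no hard analytic content; the only care needed is to phrase the block-decomposition cleanly so that the diagonal and antidiagonal parts are separated correctly, which is immediate from the explicit matrix forms in the paragraph preceding \eqref{def: Courant bracket}.
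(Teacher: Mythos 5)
Your argument is essentially the paper's: both proofs come down to comparing the diagonal and antidiagonal blocks, using invertibility of $\rho$ to pin down $a_1$ and the non-collinearity of $\alpha$ with $\Omega$ (forced by $\alpha\wedge\Omega=0$ together with non-degeneracy) to pin down $a_2,a_3$. The paper organizes this as three cases (same quadric; same $k$ but different $(\epsilon_2,\epsilon_3)$; different $k$, where it simply observes the types differ), whereas you package it as a single linear-independence claim for $\{\mathbb{J}_\rho,\mathbb{J}_\alpha,\mathbb{J}_\Omega\}$, which is cleaner and also shows $\mathcal{A}$ is genuinely three-dimensional.

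One step is stated too glibly, though. When the two points lie on quadrics belonging to \emph{different} sign choices $(\epsilon_2^{\mathbb{A}},\epsilon_3^{\mathbb{A}})\neq(\epsilon_2^{\mathbb{B}},\epsilon_3^{\mathbb{B}})$, the two structures are not linear combinations of the \emph{same} triple, so $\mathbb{A}-\mathbb{B}$ is not of the form $(a_1-b_1)\mathbb{J}_\rho+(a_2-b_2)\mathbb{J}_\alpha+(a_3-b_3)\mathbb{J}_\Omega$ and linear independence of the fixed triple does not literally apply; this is exactly the paper's Case 2. The repair is already contained in your block analysis: the upper-right block of $\mathbb{A}$ equals $a_2\pi_\alpha^\#+a_3\pi_\Omega^\#$ independently of the $\epsilon$'s, so equality of the structures forces $(a_2-b_2)\pi_\alpha^\#=(b_3-a_3)\pi_\Omega^\#$ and hence $a_2=b_2$, $a_3=b_3$ by non-collinearity, with $a_1=b_1$ from the diagonal; you should state this explicitly rather than appeal to the span. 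A second, purely cosmetic point: from $a_2\pi_\alpha^\#+a_3\pi_\Omega^\#=0$ one gets $a_2\Omega_\#+a_3\alpha_\#=0$ (the coefficients swap when you compose with $\alpha_\#$ and $\Omega_\#$), not $a_2\alpha_\#+a_3\Omega_\#=0$; the collinearity conclusion is unaffected.
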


\begin{proof}
Let $\mathbb{A}, \mathbb{B}$ be given by \eqref{definition of bold A}. The difference between the structures may occur via the choice of coefficients $(a_i), (b_i)$, and the values of epsilons, $\epsilon_i^\mathbb{A}, \epsilon_i^\mathbb{B}$. The coefficients $(a_i), (b_i)$ determine the corresponding $k_\mathbb{A}, k_\mathbb{B}$, defined by \eqref{the coefficient of A squared}. Note that both $\mathbb{A}$ and $ \mathbb{B}$ are constructed from a single M-A structure $(\Omega, \alpha)$, so the Pfaffian is fixed and thus $\sgn \Pf (\alpha)$ is constant. This means that there will always be only one $\rho$ and only one of the statements 1. and 2. of proposition \ref{corollary: quadrics of generalized structures from M-A structure} will apply to a given non-degenerate M-A structure. 

\hfill

\noindent \textit{Case 1.} Suppose $k_\mathbb{A} = k_\mathbb{B}$ and $\epsilon_i^\mathbb{A} = \epsilon_i^\mathbb{B}$ for $i = 2,3$. Then $\mathbb{A}$ and $\mathbb{B}$ are $\mathbb{R}$-linear combinations of the same generalized almost structures, i.e. $\mathbb{A} = a_1 \mathbb{J}_\rho + a_2 \mathbb{J}_\alpha + a_3 \mathbb{J}_\Omega$ and $\mathbb{B} = b_1 \mathbb{J}_\rho + b_2 \mathbb{J}_\alpha + b_3 \mathbb{J}_\Omega$. This means 
    \begin{align*}
        \mathbb{A} = \mathbb{B} \quad \iff \quad a_i = b_i \ \forall i \ .
    \end{align*}
Note that this case amounts to choosing the structures $\mathbb{A}, \mathbb{B}$ on the same quadric. 

\hfill

\noindent \textit{Case 2.} Suppose $k_\mathbb{A} = k_\mathbb{B}$ but $\epsilon_i^\mathbb{A} \neq \epsilon_i^\mathbb{B}$ for at least one $i$. Since the structure $\mathbb{J}_\rho$ is fixed for both the $\mathbb{A}$ and $\mathbb{B}$ (due to the same $\Pf (\alpha)$), we have $\mathbb{A} = \mathbb{B}$, if and only if $a_1 = b_1$ and 
    \begin{align*}
        \begin{pmatrix}
        0 & a_2 \pi_\alpha^\# + a_3 \pi_\Omega^\#  \\ 
        a_2 \epsilon_2^\mathbb{A} \alpha_\# +  a_3 \epsilon_3^\mathbb{A} \Omega_\# & 0
        \end{pmatrix} =
        \begin{pmatrix}
        0 & b_2 \pi_\alpha^\# + b_3 \pi_\Omega^\#  \\ 
        b_2 \epsilon_2^\mathbb{B} \alpha_\# +  b_3 \epsilon_3^\mathbb{B} \Omega_\# & 0
        \end{pmatrix} \ .
    \end{align*}
Comparing the upper right blocks we get 
    \begin{align}\label{intermediate computation 3}
        (a_2 - b_2) \pi_\alpha^\# = (b_3 - a_3) \pi_\Omega^\# \ .
    \end{align}
By definition of the M-A structure $(\Omega, \alpha)$, we have $\alpha \wedge \Omega = 0$. This implies that the dual bivectors satisfy $\pi_\alpha \wedge \pi_\Omega = 0$. Consequently, $\pi_\alpha^\#$ cannot be written as a non-zero multiple of $\pi_\Omega^\#$, and thus \eqref{intermediate computation 3} holds, if and only if $a_2 = b_2$ and $a_3 = b_3$.

\hfill

\noindent \textit{Case 3.}  Suppose $k_\mathbb{A} \neq k_\mathbb{B}$. Then $\mathbb{A}$ is a generalized almost structure of a different type than $\mathbb{B}$, which concludes the proof.
\end{proof}


Propositions \ref{corollary: quadrics of generalized structures from M-A structure} and \ref{prop.: two different points on quadrics give two different g.a. structures} give the following theorem. 

\begin{theorem}\label{thm: generalized geometry and M-A structures}
Let $(\Omega, \alpha) \in \Omega^2 \left( T^*\mathcal{B} \right) \times \Omega^2 \left( T^*\mathcal{B} \right)$ be a non-degenerate M-A structure on $T^*\mathcal{B}$, and let $\rho \in \operatorname{End} (TT^*\mathcal{B})$ be given by \eqref{def: rho_alpha tensor}. Then $(\Omega, \alpha)$ defines by \eqref{definition of bold A} a family of generalized almost geometries $\mathbb{A} \in \operatorname{End} \left(\mathbb{T} T^*\mathcal{B} \right)$ on $T^*\mathcal{B}$. They are parameterized by quadric surfaces in $\mathbb{R}^3$, and described in tables \ref{tab: table for Pf > 0 and k = 1} - \ref{tab: table for Pf < 0 and k = -1}. Two different points of any of the quadrics parameterize two different geometries.
\end{theorem}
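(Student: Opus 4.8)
The plan is to obtain Theorem \ref{thm: generalized geometry and M-A structures} as an essentially immediate synthesis of the two preceding propositions, since all of the analytic and algebraic content has already been established. First I would note that for a non-degenerate M-A structure $(\Omega,\alpha)$ the sign $\sgn \Pf(\alpha)$ is a well-defined constant (by Definition \ref{def: Pfaffian, elliptic/hyperbolic/normalized structure} and the non-vanishing of $\Pf(\alpha)$), so exactly one of the elliptic or hyperbolic cases occurs; this fixes $\rho$ uniquely via \eqref{def: rho_alpha tensor} and selects whether statement 1 or statement 2 of Proposition \ref{corollary: quadrics of generalized structures from M-A structure} applies.

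Next I would invoke Proposition \ref{prop.: pair-wise anticommutative diag/antidiag structures}: choosing $\epsilon_1=-1$ and any $(\epsilon_2,\epsilon_3)$, the anticommutativity condition \eqref{condition for anticommutativity of J_alpha and J_Omega} reduces in canonical coordinates to $\Pf(\alpha)=\epsilon_2\epsilon_3$, which can be arranged by the choice of signs $(\epsilon_2,\epsilon_3)$ after normalizing $(\Omega,\alpha)$ to $|\Pf(\alpha)|=1$ via \eqref{def: normalization} (this normalization changes neither the underlying M-A equation nor $\rho$, as observed in the paragraph on normalization and below \eqref{rho structure matrix - general case}). Hence for normalized $(\Omega,\alpha)$ the triple $\mathbb{J}_\rho,\mathbb{J}_\alpha,\mathbb{J}_\Omega$ is pairwise anticommuting, and Proposition \ref{corollary: quadrics of generalized structures from M-A structure} applies verbatim: the combination $\mathbb{A}=a_1\mathbb{J}_\rho+a_2\mathbb{J}_\alpha+a_3\mathbb{J}_\Omega$ from \eqref{definition of bold A} is a generalized almost structure precisely when $|k|=1$ for $k$ as in \eqref{the coefficient of A squared}, and the explicit quadrics in $\mathcal{A}=\operatorname{span}_{\mathbb R}\{\mathbb{J}_\rho,\mathbb{J}_\alpha,\mathbb{J}_\Omega\}\subset\operatorname{End}(\mathbb{T}T^*\mathcal{B})$ are read off from tables \ref{tab: table for Pf > 0 and k = 1}--\ref{tab: table for Pf < 0 and k = -1} according to $\sgn\Pf(\alpha)$, $k$, and $\sgn(\epsilon_2,\epsilon_3)$. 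This yields the asserted family of generalized almost geometries parameterized by quadric surfaces in $\mathbb{R}^3$ (in the coordinates $(a_1,a_2,a_3)$).

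Finally, the injectivity statement — that two distinct points of any of these quadrics give two distinct generalized almost structures, and likewise across different quadrics — is exactly Proposition \ref{prop.: two different points on quadrics give two different g.a. structures}, whose proof rests on the linear independence of $\mathbb{J}_\rho,\mathbb{J}_\alpha,\mathbb{J}_\Omega$ (in turn a consequence of $\pi_\alpha\wedge\pi_\Omega=0$, forcing $\pi_\alpha^\#$ not to be a scalar multiple of $\pi_\Omega^\#$, together with the block structure distinguishing $\mathbb{J}_\rho$ from the anti-diagonal pair) and on the type invariant $k$ separating GaPC from GaC structures. Assembling these three observations gives the theorem. I do not expect a genuine obstacle here: the only points requiring care are (i) making explicit that one may pass to the normalized representative so that $\Pf(\alpha)=\epsilon_2\epsilon_3$ can be met — and recording that this does not affect $\rho$ or the M-A equation — and (ii) being careful that the sign bookkeeping in $k$ matches the four tables in each of the elliptic and hyperbolic cases; both are routine given the cited propositions.
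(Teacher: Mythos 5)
Your proposal is correct and follows essentially the same route as the paper, whose entire proof of this theorem is the single remark that Propositions \ref{corollary: quadrics of generalized structures from M-A structure} and \ref{prop.: two different points on quadrics give two different g.a. structures} combine to give the statement. If anything you are more careful than the paper: you make explicit that the anticommutativity hypothesis \eqref{condition for anticommutativity of J_alpha and J_Omega}, i.e.\ $\Pf(\alpha)=\epsilon_2\epsilon_3$, is only achievable after normalization and for the matching choice of $\sgn(\epsilon_2\epsilon_3)$, a point the paper leaves implicit in passing from the conditional Proposition \ref{corollary: quadrics of generalized structures from M-A structure} to the unconditional-looking theorem.
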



\noindent \textbf{On the necessity of anticommutativity assumptions.} Considering the assumptions of proposition \ref{corollary: quadrics of generalized structures from M-A structure}, we now focus on the question whether the pair-wise anticommutativity (which was assumed in the corollary by the choice $\epsilon_1 = -1$ in \eqref{definition of bold A} and by the equation $\epsilon_3 \pi_\alpha^\# \Omega_\# + \epsilon_2 \pi_\Omega^\# \alpha_\# = 0$) is a~necessary condition for $\mathbb{A} = a_1 \mathbb{J}_\rho + a_2 \mathbb{J}_\alpha + a_3 \mathbb{J}_\Omega$ to be a generalized almost structure. We want to consider the situation where all three generalized almost structures $\mathbb{J}_\rho, \mathbb{J}_\alpha, \mathbb{J}_\Omega$ given by \eqref{diag and antidiag structures for generic 2D M-A equation} are contributing to $\mathbb{A}$. So we assume $a_i \neq 0$ for all $i$. Without any assumption about the anticommutativity, $\mathbb{A}^2$ is described by \eqref{intermediate computation 0}. If $\mathbb{A}$ is a generalized almost structure, then $\mathbb{A}^2 = \pm \operatorname{Id}_{\mathbb{T}\mathcal{B}}$ and by proposition \ref{cor: generalized geometry from generic M-A structures} 
    \begin{align}\label{sum of squares of g.a.s.}
        {a_1}^2 {\mathbb{J}_\rho}^2 + {a_2}^2 {\mathbb{J}_\alpha}^2 + {a_3}^2 {\mathbb{J}_\Omega}^2 = c_1 \operatorname{Id}_{\mathbb{T}\mathcal{B}} \ , 
    \end{align}
for appropriate $c_1 \in \mathbb{R}$. This means 
    \begin{align*}
    a_1 a_2 \{ \mathbb{J}_\rho, \mathbb{J}_\alpha \} + a_1 a_3 \{ \mathbb{J}_\rho, \mathbb{J}_\Omega \} + a_2 a_3 \{ \mathbb{J}_\alpha, \mathbb{J}_\Omega \} = c_2 \operatorname{Id}_{\mathbb{T}\mathcal{B}} \ ,     
    \end{align*}
for some $c_2 \in \mathbb{R}$. Since both products $\mathbb{J}_\rho \mathbb{J}_\alpha$ and $\mathbb{J}_\rho \mathbb{J}_\Omega$ have only zeros in the diagonal blocks, the last equation implies
    \begin{align*}
          \{ \mathbb{J}_\rho, \mathbb{J}_\alpha \} = 0 \ , &&  \{ \mathbb{J}_\rho, \mathbb{J}_\Omega \} = 0 \ , && a_2 a_3 \{ \mathbb{J}_\alpha,  \mathbb{J}_\Omega \} = c_2 \operatorname{Id}_{\mathbb{T}\mathcal{B}} \ .
    \end{align*}
By proposition \ref{prop.: pair-wise anticommutative diag/antidiag structures}, the first two equations are satisfied, if and only if $\epsilon_1 = -1$ in $\mathbb{J}_\rho = \begin{pmatrix} \rho & 0 \\ 0 & \epsilon_1 \rho^* \end{pmatrix}$. Moreover, if $\epsilon_1 = -1$, then from proposition \ref{lemma: diagonal antidiagonal structures} follows that \eqref{sum of squares of g.a.s.} is equivalent to 
    \begin{align*}
       c_1 = - \sgn \Pf (\alpha) {a_1}^2 + {a_2}^2\epsilon_2 + {a_3}^2\epsilon_3 \ . 
    \end{align*}
Note that $c_1$ coincides with $k$ described in \eqref{the coefficient of A squared}. Now let us assume that the equation $a_2 a_3 \{ \mathbb{J}_\alpha,  \mathbb{J}_\Omega \} = c_2 \operatorname{Id}_{\mathbb{T}M}$ can be solved. As a consequence of $\mathbb{A}^2 = \pm \operatorname{Id}_{\mathbb{T}\mathcal{B}}$, the constants $c_1, c_2$ must satisfy $|c_1 + c_2| = 1$. At the same time, proposition \ref{lemma: diagonal antidiagonal structures} implies $\mathbb{A}^\bullet \eta = -c_1 \eta$, and we need $|c_1| = 1$. 

Comming back to the equation $a_2 a_3 \{\mathbb{J}_\alpha,  \mathbb{J}_\Omega \} = c_2 \operatorname{Id}_{\mathbb{T}\mathcal{B}}$. In matrices, this corresponds to  
    \begin{align}\label{intermediate system 1}
        \begin{split}
        \epsilon_2 \epsilon_3 \mathbb{1} + (\underline{\alpha} \underline{\Omega})^2 & = \frac{-c_2 \epsilon_2}{a_2 a_3} \underline{\alpha} \underline{\Omega} , \\
        \epsilon_2 \epsilon_3 \mathbb{1} + (\underline{\Omega} \underline{\alpha})^2 & = \frac{-c_2 \epsilon_2}{a_2 a_3} \underline{\Omega} \underline{\alpha} \ .
        \end{split}
    \end{align}
Since $ (\underline{\alpha} \underline{\Omega})^2 = (\underline{\Omega} \underline{\alpha})^2$, the system \eqref{intermediate system 1} can be solved, if and only if $[\underline{\alpha}, \underline{\Omega}] = 0$. In this case, the two equations coincide. Note that the requirement $[\underline{\alpha}, \underline{\Omega}] = 0$ is not present if we want $\{\mathbb{J}_\alpha, \mathbb{J}_\Omega\} = 0$ (which corresponds to $c_2 = 0$). This is because the right-hand side of \eqref{intermediate system 1} vanish if we rewrite $\{\mathbb{J}_\alpha,  \mathbb{J}_\Omega \} = 0$ as a~matrix equation. 

Thus we arrived at the following conclusion. If we do not assume a priori any relation between $\mathbb{J}_\rho, \mathbb{J}_\alpha, \mathbb{J}_\Omega$, then for $\mathbb{A}$ to be a generalized almost structure, $\mathbb{J}_\rho$ must anticommute with $\mathbb{J}_\alpha$ and $\mathbb{J}_\Omega$. The assumption $ \{ \mathbb{J}_\alpha,  \mathbb{J}_\Omega \} = 0$ can be replaced with $ \{ \mathbb{J}_\alpha,  \mathbb{J}_\Omega \} = c \operatorname{Id}_{\mathbb{T}\mathcal{B}}$, with the price given by the necessity of $[\underline{\alpha}, \underline{\Omega}] = 0$, as well as the requirement $|c + k| = 1$, which is in addition to $|k| = 1$, where $k$ is given by \eqref{the coefficient of A squared}.


\hfill 

\noindent \textbf{On the link between M-A structures and M-A equations.} We have seen in propositions \ref{cor: generalized geometry from generic M-A structures} and \ref{corollary: quadrics of generalized structures from M-A structure} how a non-degenerate M-A structure gives rise to various generalized almost structures. We have also described the link between M-A structures and M-A equations, which is provided by \eqref{def: M-A equation}. Now we want to use this link to define generalized geometries associated with a given M-A equation. To do this, we have to take care of ambiguities (and the corresponding well-definedness issue) associated with the following observation. 

Consider non-degenerate M-A structures $(\Omega, \alpha)$ and $(\Omega,  \tilde{\alpha})$. By definition, $\alpha \wedge \Omega = 0 = \Tilde{\alpha} \wedge \Omega$. Moreover, for $h \in C^\infty (\mathcal{B})$
    \begin{align*}
        (\D f )^* (h \alpha) = h|_{\operatorname{Im} \D f} (\D f)^*\alpha \ ,
    \end{align*}
where $f \in C^\infty (\mathcal{B})$ and $\operatorname{Im} \D f$ is understood in the sense $\D f \colon B \to T^*\mathcal{B}, x \mapsto \D_x f$. Consequently, if $h$ is everywhere non-zero,
    \begin{align*}
        (\D f)^* \alpha = 0 \quad  \iff \quad  (\D f)^* (h \alpha) = 0 \ .
    \end{align*}
Thus two non-degenerate M-A structures give rise to the same M-A equation, if and only if $\Tilde{\alpha} = h \alpha$. Denote by $[\alpha]$ the equivalence class of $2$-forms satisfying $\alpha \wedge \Omega = 0$, where 
    \begin{align*}
        \Tilde{\alpha} \in [\alpha] \quad \overset{\text{def.}}{\iff} \quad \Tilde{\alpha} = h \alpha \ ,
    \end{align*}
for a non-vanishing $h \in C^\infty (T^*\mathcal{B}$. Then there is the following $1-1$ correspondence between equivalence classes and M-A equations
    \begin{align*}
        \{ [\alpha] \ | \ \alpha \in \Gamma \left(\Lambda^2 T^*\mathcal{B} \right) \} \longleftrightarrow \{ (\D f)^* \alpha = 0 \ | \  f \in C^\infty(\mathcal{B}) \} \ .
    \end{align*}
To put this in the context of generalized geometries associated with M-A structures, we want to see how the change of representative in the equivalence class $[\alpha]$ changes the family of generalized almost structures given by proposition \eqref{cor: generalized geometry from generic M-A structures}. Recall that $\mathbb{A}$ determined by a non-degenerate M-A structure $(\Omega, \alpha)$ is defined via three generalized almost structures given by \eqref{diag and antidiag structures for generic 2D M-A equation} with $\epsilon_1 = -1$, and satisfying the condition $\epsilon_3 \pi_\alpha^\# \Omega_\# + \epsilon_2 \pi_\Omega^\# \alpha_\# = 0$. 

Consider a non-degenerate M-A structure $(\Omega, \Tilde{\alpha})$, where $\Tilde{\alpha} \in [\alpha]$. Firstly, it is clear that $\mathbb{J}_\Omega$ is the same for both M-A structures, since $\Omega$ remains the same. Secondly, we compute the change in $\rho$ and the corresponding $\mathbb{J}_\rho$. Starting with the Pfaffian, 
    \begin{align}\label{On the link between M-A structures and M-A equations: Pfaffian}
         \tilde{\alpha} \wedge \tilde{\alpha} = h^2 \Pf (\alpha) \Omega \wedge \Omega \quad \Rightarrow \quad \Pf (\tilde{\alpha}) =  h^2 \Pf (\alpha) \ .
    \end{align}
Now using the definition \eqref{def: rho_alpha tensor}, we have
    \begin{align}\label{On the link between M-A structures and M-A equations: rho}
         \tilde{\rho} = \frac{1}{ \sqrt{ | \Pf (\tilde{\alpha}) | } } \pi_\Omega^\# \Tilde{\alpha}_\# = \frac{h}{ |h| \sqrt{ | \Pf ( \alpha) | } }  \pi_\Omega^\# \alpha_\# = \sgn h \rho \ , 
    \end{align}
with the immediate consequence
     \begin{align}\label{eq: change of J given by changing representative of [alpha]}
         \mathbb{J}_{ \tilde{\rho} } = \sgn h \mathbb{J}_\rho \ .
     \end{align}
Thirdly, we focus on $\mathbb{J}_\alpha$. We have $ \Tilde{\alpha}_\# = h \alpha_\#$ and $ \pi_{ \Tilde{\alpha} }^\# = \frac{1}{h} \pi_\alpha^\#$, which follows from $C^\infty$-linearity of tensor fields and the definition of $\pi_\alpha^\#$. Thus $\alpha \mapsto \Tilde{\alpha} = h \alpha$ results in 
    \begin{align}\label{On the link between M-A structures and M-A equations: J_alpha}
        \mathbb{J}_\alpha = 
        \begin{pmatrix}
        0  & \pi_\alpha^\# \\
        \epsilon_2 \alpha^\# & 0
        \end{pmatrix} \quad \longmapsto \quad 
        \begin{pmatrix}
        0 & \frac{1}{h} \pi_\alpha^\# \\
        h \epsilon_2 \alpha^\# & 0
        \end{pmatrix} = \mathbb{J}_{ \tilde{\alpha} } 
    \end{align}
Now we are ready to investigate the anticommutativity of generalized almost strutures associated with $(\Omega, \tilde{\alpha})$. We have $\{\mathbb{J}_{ \tilde{\rho} } , \mathbb{J}_\Omega \} = \sgn h \{\mathbb{J}_\rho , \mathbb{J}_\Omega \} $, and thus 
    \begin{align*}
        \{\mathbb{J}_{ \tilde{\rho} } , \mathbb{J}_\Omega \} = 0 \quad \iff \quad \epsilon_1 = -1 \ .
    \end{align*}
Proceeding with the anticommutator of $\mathbb{J}_{ \tilde{\rho} }$ and $\mathbb{J}_{ \tilde{\alpha} }$, we have 
    \begin{align*}
        \{ \mathbb{J}_{ \tilde{\rho} } ,  \mathbb{J}_{ \tilde{\alpha} } \} = \sgn h
            \begin{pmatrix}
            0 & \frac{1}{h} \left( \rho \pi_\alpha^\# + \epsilon_1 \pi_\alpha^\# \rho^* \right)  \\
            h \left( \epsilon_1 \epsilon_2 \rho^* \alpha_\# + \epsilon_2 \alpha_\# \rho \right) &  0 
            \end{pmatrix} \ , 
    \end{align*}
which implies 
    \begin{align*}
        \{ \mathbb{J}_{ \tilde{\rho} } ,  \mathbb{J}_{ \tilde{\alpha} } \} = 0 \quad \iff \quad 
            \begin{cases}
            0 = \rho \pi_\alpha^\# + \epsilon_1 \pi_\alpha^\# \rho^*  \\
            0 = \epsilon_1 \epsilon_2 \rho^* \alpha_\# + \epsilon_2 \alpha_\# \rho  
            \end{cases} 
    \end{align*}
Considering the right side of the equivalence, we have seen in the proof of proposition \ref{prop.: pair-wise anticommutative diag/antidiag structures} that the two equations are equivalent, and are satisfied, if and only if $\epsilon_1 = -1$. Finally, we consider the anticommutator of  $ \mathbb{J}_{ \tilde{\alpha} } $ and $ \mathbb{J}_\Omega $, after which we discuss the overall dependence of $\mathbb{A}$ on the choice of the representative in the class $[\alpha]$. The anticommutator is 
    \begin{align*}
        \{ \mathbb{J}_{ \tilde{\alpha} } , \mathbb{J}_\Omega \} =
            \begin{pmatrix}
            h \epsilon_3 \pi_\alpha^\# \Omega_\# + \frac{\epsilon_2}{h} \pi_\Omega^\# \alpha_\# & 0 \\
            0 & h \epsilon_2 \alpha_\# \pi_\Omega^\# + \frac{\epsilon_3}{h} \Omega_\# \pi_\alpha^\#
            \end{pmatrix} \ .
    \end{align*}
Analogously as with the previous anticommutators, vanishing of $ \{ \mathbb{J}_{ \tilde{\alpha} } , \mathbb{J}_\Omega \}$ boils down to one equation
    \begin{align*}
        \{ \mathbb{J}_{ \tilde{\alpha} } , \mathbb{J}_\Omega \} = 0 \quad \iff \quad h^2 \epsilon_3 \pi_\alpha^\# \Omega_\# + \epsilon_2 \pi_\Omega^\# \alpha_\# = 0 \ . 
    \end{align*}
In canonical coordinates, this amounts to
    \begin{align}\label{the condition on anticommutativity of J tilde alpha J Omega in matrices}
        (\underline{\alpha} \underline{\Omega})^2 = \frac{- \epsilon_2 \epsilon_3}{h^2} \mathbb{1} \ . 
    \end{align}
Now we are ready to compare the situation between $(\Omega, \alpha)$ and $(\Omega, \tilde{\alpha} )$.

From the above discussion we see that there are exactly two obstructions for $(\Omega, \alpha)$ and $(\Omega, \tilde{\alpha} )$ to determine the same family of generalized almost structures. The first obstruction follows from the change of $\mathbb{J}_\alpha$, described in \eqref{On the link between M-A structures and M-A equations: J_alpha}. The second obstruction, which is a consequence of the first one on the level of anticommutators is contained in the equation \eqref{the condition on anticommutativity of J tilde alpha J Omega in matrices}. Both obstructions can be avoided by choosing $h = 1$, but this corresponds to leaving the representative of $[\alpha]$ unchanged. 


\begin{proposition}\label{prop: two representatives of [alpha] yields different g.a.s.}
Two non-degenerate M-A structures $(\Omega, \alpha)$ and $(\Omega, \tilde{\alpha} )$, where $\tilde{\alpha} \in [\alpha]$, give rise to the same family of generalized almost structures determined by proposition \ref{corollary: quadrics of generalized structures from M-A structure}, if and only if $\Tilde{\alpha} = \alpha$. 
\end{proposition}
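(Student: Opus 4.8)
\noindent\emph{Proof plan.} The ``if'' direction is immediate. For the converse, suppose $(\Omega,\alpha)$ and $(\Omega,\tilde\alpha)$ with $\tilde\alpha=h\alpha$, $h$ nowhere vanishing, determine the same family \eqref{definition of bold A}. The plan is to feed in the computations made just before the statement: $\mathbb{J}_\Omega$ is unchanged, $\mathbb{J}_{\tilde\rho}=(\sgn h)\,\mathbb{J}_\rho$ by \eqref{eq: change of J given by changing representative of [alpha]}, and $\mathbb{J}_{\tilde\alpha}$ has the form \eqref{On the link between M-A structures and M-A equations: J_alpha}, and then to turn the two ``obstructions'' isolated there into the equality $h\equiv1$. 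Note first that, since \eqref{definition of bold A} presupposes the anticommutativity condition \eqref{condition for anticommutativity of J_alpha and J_Omega}, i.e.\ $\Pf(\alpha)=\epsilon_2\epsilon_3=\pm1$, the analogous relation must hold for $\tilde\alpha$, and then \eqref{On the link between M-A structures and M-A equations: Pfaffian} forces $h^2\Pf(\alpha)=\Pf(\tilde\alpha)=\pm1$, hence $h^2\equiv1$; the argument below re-derives this independently.

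The heart of the proof is the first obstruction, the change of the generator $\mathbb{J}_\alpha$. I would evaluate the $\tilde\alpha$-family at the admissible parameter $(a_1,a_2,a_3)=(0,1,0)$ (admissible because there $k=\tilde\epsilon_2$ has $|k|=1$), which returns exactly $\mathbb{J}_{\tilde\alpha}$. If the two families coincide this endomorphism must lie in the $\alpha$-family, so $\mathbb{J}_{\tilde\alpha}=b_1\mathbb{J}_\rho+b_2\mathbb{J}_\alpha+b_3\mathbb{J}_\Omega$ for suitable constants $b_i$ and signs $\epsilon_2,\epsilon_3$. Comparing the four blocks: $\mathbb{J}_\rho$ is block-diagonal while $\mathbb{J}_\alpha,\mathbb{J}_\Omega,\mathbb{J}_{\tilde\alpha}$ are block-antidiagonal, forcing $b_1=0$; the upper-right blocks give $h^{-1}\pi_\alpha^\#=b_2\pi_\alpha^\#+b_3\pi_\Omega^\#$, and since $\alpha_\#=f\,\Omega_\#$ for a smooth $f$ would contradict the M-A condition $\alpha\wedge\Omega=0$ (as $\Omega\wedge\Omega$ is nowhere zero), $\pi_\alpha^\#$ is not a smooth-function multiple of $\pi_\Omega^\#$, whence $b_3=0$ and $b_2=h^{-1}$ is a nonzero constant; finally the lower-left blocks read $h\,\tilde\epsilon_2\,\alpha_\#=b_2\,\epsilon_2\,\alpha_\#=h^{-1}\epsilon_2\,\alpha_\#$, so $h^2\tilde\epsilon_2=\epsilon_2$, and as $h^2>0$ while $\tilde\epsilon_2,\epsilon_2\in\{-1,1\}$ this gives $h^2=1$ and $\tilde\epsilon_2=\epsilon_2$.

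It remains to exclude $h\equiv-1$, the other option on the connected $T^*\mathcal{B}$. For $h=-1$ one gets $\tilde\alpha=-\alpha$, hence $\mathbb{J}_{\tilde\rho}=-\mathbb{J}_\rho$ and (with the matching sign $\tilde\epsilon_2=\epsilon_2$) $\mathbb{J}_{\tilde\alpha}=-\mathbb{J}_\alpha$, so the three endomorphisms out of which \eqref{definition of bold A} is built for $\tilde\alpha$ are $-\mathbb{J}_\rho,\,-\mathbb{J}_\alpha,\,\mathbb{J}_\Omega$ rather than $\mathbb{J}_\rho,\,\mathbb{J}_\alpha,\,\mathbb{J}_\Omega$; concretely the structure attached to the quadric point $(0,1,0)$ changes from $\mathbb{J}_\alpha$ to $-\mathbb{J}_\alpha\neq\mathbb{J}_\alpha$, so the two parameterized families of Proposition \ref{corollary: quadrics of generalized structures from M-A structure} differ. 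Therefore $h\equiv1$, i.e.\ $\tilde\alpha=\alpha$. As a cross-check, when $h^2=1$ the second obstruction \eqref{the condition on anticommutativity of J tilde alpha J Omega in matrices} collapses to the original relation \eqref{condition for anticommutativity of J_alpha and J_Omega in canonical coordinates}, in line with the conclusion.

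I expect the main difficulty to be the bookkeeping of the second paragraph: one must rule out that a sign flip of the parameters $\epsilon_i$, or of $\sgn h$, combined with a rescaling of the $a_i$, could silently reproduce the identical collection of endomorphisms with $h\neq1$. The mechanism that forbids this is precisely the non-proportionality of $\pi_\alpha^\#$ to $\pi_\Omega^\#$ (and of $\alpha_\#$ to $\Omega_\#$) forced by the effectivity condition $\alpha\wedge\Omega=0$ defining a Monge--Ampère structure, together with the block-diagonal versus block-antidiagonal shapes of $\mathbb{J}_\rho$ on the one hand and $\mathbb{J}_\alpha,\mathbb{J}_\Omega$ on the other.
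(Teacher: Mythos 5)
Your proposal is correct and follows essentially the same route as the paper, which proves this proposition via the preceding discussion of the two obstructions (the transformation laws \eqref{eq: change of J given by changing representative of [alpha]}, \eqref{On the link between M-A structures and M-A equations: J_alpha} and the rescaled anticommutativity condition \eqref{the condition on anticommutativity of J tilde alpha J Omega in matrices}); your block-by-block comparison at the admissible point $(0,1,0)$, using that $\pi_\alpha^\#$ cannot be pointwise proportional to $\pi_\Omega^\#$ because $\alpha \wedge \Omega = 0$ with both forms non-degenerate, is a more explicit justification of the generator-wise equality that the paper takes for granted. One remark: your exclusion of $h \equiv -1$ relies on reading ``the same family'' as the same \emph{parameterized} family over the quadrics (the assignment $(a_1,a_2,a_3) \mapsto \mathbb{A}$), since as unparameterized sets the $h=-1$ family coincides with the original one under $(a_1,a_2,a_3)\mapsto(-a_1,-a_2,a_3)$; this reading is consistent with the paper's usage in proposition \ref{prop.: two different points on quadrics give two different g.a. structures} and theorem \ref{thm: generalized geometry and M-A structures}, but is worth stating explicitly.
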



The anticommutativity between $\mathbb{J}_{ \tilde{\rho} }$ and $\mathbb{J}_\Omega$ (as well as $\mathbb{J}_{ \tilde{\alpha} }$) is satisfied by the same condition for all representatives of $[\alpha]$, namely by choosing $\epsilon_1 = -1$. Since the change of representative of $[\alpha]$ transforms $\mathbb{J}_\rho$ according to \eqref{eq: change of J given by changing representative of [alpha]}, we arrive at the following

\begin{proposition}\label{prop.: 1-1 correspondence between M-A equations and g.a. structures}
There is a one-to-one correspondence between 2D symplectic non-degenerate M-A equations and equivalence classes $[\mathbb{A}]$ of generalized almost structures given by proposition \ref{corollary: quadrics of generalized structures from M-A structure} with $a_2 = 0$. Representatives of $[\mathbb{A}]$ have the same type. 
\end{proposition}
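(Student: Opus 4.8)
\noindent The strategy is to show that the obstruction found in Proposition~\ref{prop: two representatives of [alpha] yields different g.a.s.} --- which prevented the construction \eqref{definition of bold A} from descending from non-degenerate M-A structures to their equivalence classes $[\alpha]$ (equivalently, to M-A equations) --- is caused solely by the term $a_2\mathbb{J}_\alpha$, and disappears when $a_2 = 0$, leaving only a residual $\mathbb{Z}_2$-ambiguity in the sign of $a_1$. The first step is to specialise \eqref{definition of bold A} to $a_2 = 0$, so that $\mathbb{A} = a_1\mathbb{J}_\rho + a_3\mathbb{J}_\Omega$, and to record how the two summands transform under a change of representative $\alpha \mapsto \tilde\alpha = h\alpha$ of $[\alpha]$: the structure $\mathbb{J}_\Omega$ is built from $\Omega$ alone, hence is unchanged, while by \eqref{On the link between M-A structures and M-A equations: rho}--\eqref{eq: change of J given by changing representative of [alpha]} one has $\mathbb{J}_{\tilde\rho} = (\sgn h)\,\mathbb{J}_\rho$, with $\sgn h$ constant on the connected $T^*\mathcal{B}$. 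Therefore, keeping $(a_1,a_3,\epsilon_3)$ fixed, the rescaling acts on $\mathbb{A}$ only by $a_1 \mapsto (\sgn h)\,a_1$. This is exactly where the hypothesis $a_2 = 0$ enters: for $a_2 \neq 0$ the term $a_2\mathbb{J}_{\tilde\alpha}$ genuinely changes, since in \eqref{On the link between M-A structures and M-A equations: J_alpha} its off-diagonal blocks scale by $h^{-1}$ and $h$, so no analogous simple behaviour is available. I would then \emph{define} $[\mathbb{A}]$ to be the set $\{\,a_1\mathbb{J}_\rho + a_3\mathbb{J}_\Omega,\ -a_1\mathbb{J}_\rho + a_3\mathbb{J}_\Omega\,\}$, which by the above depends only on the M-A equation and on the admissible data $(a_1,a_3,\epsilon_3)$ --- with $|k| = 1$ for $k$ as in \eqref{the coefficient of A squared} --- and not on the chosen representative; admissible data exist for every non-degenerate M-A equation by passing to the normalised representative, for which \eqref{condition for anticommutativity of J_alpha and J_Omega} holds once $\epsilon_2\epsilon_3 = \sgn\Pf(\alpha)$, so Proposition~\ref{corollary: quadrics of generalized structures from M-A structure} applies.

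\noindent For injectivity I would recover the M-A equation from $\mathbb{A}$ itself. Via the canonical splitting $\mathbb{T}T^*\mathcal{B} = TT^*\mathcal{B} \oplus T^*T^*\mathcal{B}$, the off-diagonal blocks of \eqref{definition of bold A} with $a_2 = 0$ are $a_3\pi_\Omega^\#$ and $a_3\epsilon_3\Omega_\#$, so $a_3$ and $\epsilon_3$ are read off; the $(1,1)$-block is $a_1\rho$. Squaring gives $(a_1\rho)^2 = -\sgn\Pf(\alpha)\,a_1^2\,\operatorname{Id}$, and since $\rho$ is normalised this recovers $a_1^2$ (the modulus of the scalar) and $\sgn\Pf(\alpha)$; assuming $a_1 \neq 0$ --- the slice $a_1 = 0$ yields $\mathbb{A} = \pm\mathbb{J}_\Omega$, which is independent of $\alpha$ and must be excluded --- the endomorphism $\rho$ is then determined up to an overall sign. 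It remains to check that a non-degenerate M-A structure is pinned down, up to the equivalence $\tilde\alpha = h\alpha$, by $\pm\rho$: if $(\Omega,\alpha)$ and $(\Omega,\beta)$ are non-degenerate with $\rho_\beta = \pm\rho_\alpha$, then since $\pi_\Omega^\#$ is an isomorphism, \eqref{def: rho_alpha tensor} gives $|\Pf(\beta)|^{-1/2}\beta_\# = \pm|\Pf(\alpha)|^{-1/2}\alpha_\#$, hence $\beta = h\alpha$ with $h = \pm(|\Pf(\beta)|/|\Pf(\alpha)|)^{1/2}$ nowhere vanishing, so $\beta \in [\alpha]$ and the two structures define the same M-A equation. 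This is the counterpart of Cases~2--3 in the proof of Proposition~\ref{prop.: two different points on quadrics give two different g.a. structures}, strengthened by the rigidity of the normalised $\rho$. Combined with the previous paragraph (representative-independence and surjectivity), this establishes the one-to-one correspondence.

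\noindent Finally, the two elements of $[\mathbb{A}]$ differ only in the sign of $a_1$, so the scalar $k = -\sgn\Pf(\alpha)\,a_1^2 + a_2^2\epsilon_2 + a_3^2\epsilon_3$ of \eqref{the coefficient of A squared} (here with $a_2 = 0$) is the same for both; by \eqref{intermediate computation 4} this means $\mathbb{A}^2 = k\,\operatorname{Id}$ and $\mathbb{A}^\bullet\eta = -k\,\eta$ for both, so both have the same type (GaPC if $k = 1$, GaC if $k = -1$), in accordance with Proposition~\ref{corollary: quadrics of generalized structures from M-A structure}.

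\noindent I expect the main obstacle to be the injectivity step, and within it the passage from the $(1,1)$-block $a_1\rho$ back to the class $[\alpha]$: one has to use both the normalisation $\rho^2 = \pm\operatorname{Id}$ (to extract $a_1^2$ and $\sgn\Pf(\alpha)$) and the invertibility of $\pi_\Omega^\#$ together with non-degeneracy of $\alpha$ (to recover $\alpha$ modulo rescaling), and to be careful to excise the degenerate locus $a_1 = 0$, where $\mathbb{A}$ forgets $\alpha$ entirely. The remaining checks --- representative-independence of $\mathbb{J}_\Omega$, the rule $a_1 \mapsto (\sgn h)\,a_1$, and the sign-of-$a_1$ invariance of $k$ --- are routine consequences of \eqref{On the link between M-A structures and M-A equations: rho}--\eqref{eq: change of J given by changing representative of [alpha]} and \eqref{intermediate computation 4}.
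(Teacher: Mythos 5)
Your proposal is correct and rests on the same foundation as the paper's proof, namely the rescaling analysis in the discussion preceding the proposition: $\mathbb{J}_\Omega$ is unchanged, $\mathbb{J}_{\tilde\rho}=\sgn h\,\mathbb{J}_\rho$, the class $[\mathbb{A}]$ is the orbit $a_1\mapsto \pm a_1$, and the type is preserved because $k$ depends only on $a_1^2$ and on $\sgn\Pf(\alpha)=\sgn\Pf(\tilde\alpha)$. Where you genuinely go beyond the paper is the injectivity step. The paper's proof is essentially a one-paragraph pointer back to that discussion and never verifies that distinct non-degenerate M-A equations yield distinct classes $[\mathbb{A}]$; you supply this by reading $a_1\rho$ off the $(1,1)$-block, extracting $a_1^2$ and $\sgn\Pf(\alpha)$ from its square, and then showing via invertibility of $\pi_\Omega^\#$ in \eqref{def: rho_alpha tensor} that $\rho_\beta=\pm\rho_\alpha$ forces $\beta=h\alpha$ with $h$ nowhere vanishing, i.e. $\beta\in[\alpha]$. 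In doing so you correctly identify a caveat the paper omits: on the slice $a_1=0$ the structure $\mathbb{A}=\pm\mathbb{J}_\Omega$ forgets $\alpha$ entirely, so the correspondence can only be one-to-one after excising $a_1=0$; the paper's statement and proof are silent on this. One small economy you miss: with $a_2=0$ the cross term $a_2a_3\{\mathbb{J}_\alpha,\mathbb{J}_\Omega\}$ in \eqref{intermediate computation 0} vanishes identically, so (as the paper notes) the condition \eqref{condition for anticommutativity of J_alpha and J_Omega} need not be imposed at all --- your detour through the normalised representative to satisfy it is harmless but unnecessary. On balance your argument is a strengthening, not merely a reproduction, of the paper's proof.
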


\begin{proof}
Based on the discussion preceding the proposition, the only thing to notice is that $a_2 = 0$ means that $\mathbb{A}$ is constructed only from $\mathbb{J}_\rho$ and $\mathbb{J}_\Omega$. This further implies that the condition \eqref{condition for anticommutativity of J_alpha and J_Omega}, which is equivalent to anticommutativity between $\mathbb{J}_\alpha$ and $\mathbb{J}_\Omega$, does not have to be satisfied due to \eqref{intermediate computation 0}. Of course, $a_2 = 0$ also affects the quadrics described in statements 1. and 2. of the proposition. Some of them will cease to exist (such as the $2$-sheeted hyperboloid of GaPC structures corresponding to $\operatorname{Pf} (\alpha) > 0$ and $(\epsilon_2, \epsilon_3) = (1, -1)$) and those that survive will become quadric curves instead of surfaces. Finally, $\tilde{\mathbb{A}} \in [\mathbb{A}] \iff \tilde{a}_1 = \sgn h a_1$ implies together with \eqref{On the link between M-A structures and M-A equations: Pfaffian}, that the two representatives of $[\mathbb{A}]$ have the same type.
\end{proof}

The notion of normalization \eqref{def: normalization} yields a unified way of choosing a representative in $[\alpha]$. This leads to the following definition and the subsequent theorem.


\begin{definition}
Let $(\D f)^* \alpha = 0$ be a 2D symplectic Monge-Ampère equation satisfying $Pf (\alpha) \neq 0$. \textit{A generalized almost geometry associated with the M-A equation} is a generalized almost geometry determined by the normalized M-A structure $(\Omega, n(\alpha) )$ corresponding to the equation. 
\end{definition}


Now we are ready to summarize by the following theorem, which is based on propositions \ref{prop: two representatives of [alpha] yields different g.a.s.} and \ref{prop.: 1-1 correspondence between M-A equations and g.a. structures}. 

\begin{theorem}
Let $(\D f)^* \alpha = 0$ be a 2D symplectic Monge-Ampère equation on $\mathcal{B}$, such that $Pf (\alpha) \neq 0$ everywhere on $T^* \mathcal{B}$. Then there is a unique family of generalized almost geometries $\mathbb{A} \in \operatorname{End} \left(\mathbb{T} T^*\mathcal{B} \right)$ on $T^*\mathcal{B}$, determined by theorem \ref{thm: generalized geometry and M-A structures}, which are associated with the equation. 
\end{theorem}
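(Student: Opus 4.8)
The plan is to read the statement off from the material already in place: the bijection between 2D symplectic non-degenerate M-A equations and equivalence classes $[\alpha]$ of effective $2$-forms, the normalization \eqref{def: normalization} as a canonical choice of representative in $[\alpha]$, and Theorem \ref{thm: generalized geometry and M-A structures} applied to that representative. Given $(\D f)^*\alpha = 0$ with $\Pf(\alpha)$ nowhere vanishing, the bijection attaches to it the class $[\alpha]$; any representative is $\tilde\alpha = h\alpha$ with $h$ nowhere zero, and $\Pf(\tilde\alpha) = h^{2}\Pf(\alpha)$, so $\sgn\Pf$ is constant along $[\alpha]$. Hence the elliptic/hyperbolic alternative, and therefore the choice between parts 1.\ and 2.\ of Theorem \ref{thm: generalized geometry and M-A structures}, is an invariant of the equation rather than of the chosen $\alpha$.

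First I would observe that $(\Omega, n(\alpha))$ is canonical in $[\alpha]$ up to one global sign: from $\tilde\alpha = h\alpha$ one computes $n(\tilde\alpha) = |h^{2}\Pf(\alpha)|^{-1/2}\,h\alpha = \sgn(h)\,n(\alpha)$, so after normalizing the only remaining freedom is $n(\alpha)\leftrightarrow -n(\alpha) = n(-\alpha)$. Applying Theorem \ref{thm: generalized geometry and M-A structures} to $(\Omega, n(\alpha))$ — with $\epsilon_1 = -1$ and with $(\epsilon_2,\epsilon_3)$ chosen so that $\epsilon_2\epsilon_3 = \Pf(n(\alpha)) = \sgn\Pf(\alpha)$, so that the anticommutativity condition \eqref{condition for anticommutativity of J_alpha and J_Omega in canonical coordinates} holds for $n(\alpha)$ — yields the family $\mathbb{A} = a_1\mathbb{J}_\rho + a_2\mathbb{J}_\alpha + a_3\mathbb{J}_\Omega$ parameterized by the quadrics of tables \ref{tab: table for Pf > 0 and k = 1}--\ref{tab: table for Pf < 0 and k = -1}, with distinct points of the quadrics giving distinct structures by Proposition \ref{prop.: two different points on quadrics give two different g.a. structures}. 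This settles existence.

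The step I expect to be the main obstacle is ruling out any dependence on that residual sign, i.e.\ showing $n(\alpha)$ and $-n(\alpha)$ determine the very same family. Here I would invoke \eqref{On the link between M-A structures and M-A equations: rho} and \eqref{On the link between M-A structures and M-A equations: J_alpha} with $h = -1$: passing from $n(\alpha)$ to $-n(\alpha)$ sends $\rho\mapsto -\rho$, hence $\mathbb{J}_\rho\mapsto -\mathbb{J}_\rho$ and $\mathbb{J}_\alpha\mapsto -\mathbb{J}_\alpha$, while $\mathbb{J}_\Omega$ is untouched; thus the new family is $\{-a_1\mathbb{J}_\rho - a_2\mathbb{J}_\alpha + a_3\mathbb{J}_\Omega\}$ over the same quadrics. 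Since only ${a_1}^2,{a_2}^2,{a_3}^2$ enter those quadric equations, each quadric in tables \ref{tab: table for Pf > 0 and k = 1}--\ref{tab: table for Pf < 0 and k = -1} is invariant under $(a_1,a_2,a_3)\mapsto(-a_1,-a_2,a_3)$, so the two sets of endomorphisms coincide, and the family $\mathbb{A}$ attached to the equation is well defined. Uniqueness then follows: Proposition \ref{prop: two representatives of [alpha] yields different g.a.s.} shows that distinct unnormalized representatives of $[\alpha]$ would yield distinct families, so a canonical family forces a canonical representative, which normalization provides (modulo the harmless sign just treated), and Proposition \ref{prop.: 1-1 correspondence between M-A equations and g.a. structures} records that the resulting equation $\mapsto$ family assignment is one-to-one. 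Hence there is exactly one family of generalized almost geometries, the one of Theorem \ref{thm: generalized geometry and M-A structures} built from $(\Omega, n(\alpha))$, associated with the given M-A equation.
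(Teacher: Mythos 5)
Your proposal follows the same skeleton as the paper: the paper offers no separate proof of this theorem but derives it from the preceding discussion, i.e.\ the correspondence between non-degenerate M-A equations and classes $[\alpha]$, the normalization \eqref{def: normalization} as the canonical choice of representative, Theorem \ref{thm: generalized geometry and M-A structures} applied to $(\Omega, n(\alpha))$ for existence, and Propositions \ref{prop: two representatives of [alpha] yields different g.a.s.} and \ref{prop.: 1-1 correspondence between M-A equations and g.a. structures} for uniqueness. The one place where you genuinely go beyond the paper is the residual sign: you correctly compute $n(h\alpha)=\sgn(h)\,n(\alpha)$, so normalization pins down the representative only up to $n(\alpha)\leftrightarrow -n(\alpha)$, and you then argue that the two resulting families coincide \emph{as sets} because each quadric is invariant under $(a_1,a_2,a_3)\mapsto(-a_1,-a_2,a_3)$ while $\mathbb{J}_{\tilde\rho}=-\mathbb{J}_\rho$, $\mathbb{J}_{\tilde\alpha}=-\mathbb{J}_\alpha$, $\mathbb{J}_\Omega$ unchanged. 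The paper does not address this point, and your resolution sits in mild tension with Proposition \ref{prop: two representatives of [alpha] yields different g.a.s.}, which asserts that any $\tilde\alpha\neq\alpha$ in $[\alpha]$ (hence also $\tilde\alpha=-\alpha$) yields a \emph{different} family: that proposition compares the parameterized families generator by generator, whereas you compare their images in $\operatorname{End}(\mathbb{T}T^*\mathcal{B})$. Under the paper's reading, the theorem is well-posed only because the equation is presented with a distinguished $\alpha$ and hence a distinguished $n(\alpha)$; under your reading the family is intrinsically attached to the equation regardless of which representative of $[\alpha]$ one starts from. Your version is the stronger and, arguably, the one the theorem actually needs, so this is a welcome completion rather than a defect.
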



\section*{Conlusion and Outlooks}

We have described a construction of generalized almost geometries determined by non-degenerate 2D symplectic Monge-Ampère structures and the corresponding PDEs. Inspired by the results in \cite{BANOS2007841, Crainic2004GeneralizedCS, Gualtierri2011,hu2019commuting, Kosmann-Schwarzbach2010, Salvai2015, VAISMAN201584}, we constructed many new generalized almost geometries derived from Monge-Ampère structures and from geometric objects they define. We have shown that non-degenerate Monge-Ampère structures give rise to quadric surfaces of generalized almost geometries. We also discussed the link between Monge-Ampère structures and Monge-Ampère equations in this context. In future work, we will be interested in constructing generalized geometries in dimensions higher than two, particularly in dimension three, where the situation is much richer. We will also focus on studying various notions of integrability, especially the notion of weak integrability in our context, as well as the closely related notion of Bismut connections \cite{hu2019commuting, Andrada2022, VAISMAN201584}. Particularly interesting would be to find further links between (weak) integrability of generalized structures and the local equivalence problem of certain Monge-Apmère PDEs in dimension three.

\bibliographystyle{plain}

\end{document}